\theoremstyle{definition}
\newtheorem{thm}{Theorem}[section]
\newtheorem{lem}[thm]{Lemma}
\newtheorem{prop}[thm]{Proposition}
\newtheorem{cor}[thm]{Corollary}
\newtheorem*{conj}{Conjecture}
\renewcommand{\O}{\mathcal{O}}
\newcommand{\p}{\mathfrak{p}}
\newcommand{\q}{\mathfrak{q}}
\renewcommand{\a}{\alpha}
\renewcommand{\b}{\mathfrak{b}}
\newcommand{\eps}{\epsilon}
\newcommand{\fun}{\varepsilon}
\newcommand{\Q}{\mathbb{Q}}
\newcommand{\supp}{\mathcal{Q}}
\newcommand{\Z}{\mathbb{Z}}
\newcommand{\ord}{\operatorname{ord}}
\renewcommand{\mod}[1]{\left(\bmod{\,#1}\right)}
\begin{document}
\begin{center}
\uppercase{\textbf{Wieferich primes in number fields and the conjectures of Ankeny--Artin--Chowla and Mordell}}
\vskip 10pt
{\textsc{Nic Fellini and M. Ram Murty}}
\end{center}
\vskip 10pt

\centerline{\small\bf Abstract}
\noindent
{\small
For a prime $p\equiv 1 \mod{4}$, let
\[
\fun = \frac{1}{2}\left( t + u\sqrt{p}\right) 
\]
be the fundamental unit of the real quadratic field $\Q(\sqrt{p})$.
In 1951, N. Ankeny, E. Artin, and S. Chowla asked whether  $p$ can divide $u$.  They suggested that this can never
happen and this has since been called the Ankeny--Artin--Chowla (AAC) conjecture.   
 We show that if $\p$ is the prime above $p$ in $\Q(\sqrt{p})$, then the AAC conjecture is false if and only if 
\[
\fun^{p-1} \equiv 1 \mod{\p^2}.
\]
Thus, the AAC conjecture is related to the existence of number field
analogues of Wieferich primes. Therefore, 
in the second part of this paper, we investigate the infinitude of Wieferich primes in number fields.  Subject to Masser's $abc$-conjecture for number fields, we prove that for any fixed $\a\in \O_K\setminus\{0\}$ that is not a root of unity, there are infinitely many primes ideals $\p\subseteq \O_K$ for which
\[
\a^{N(\mathfrak{p})-1} \not\equiv 1 \mod{\mathfrak{p}^2}.
\]
Additionally, we show under the weaker assumption that there are finitely many \textit{base-$\a$ super-Wieferich primes}, and that there are infinitely many base-$\a$ non-Wieferich primes. In both cases, we obtain the lower bound
\[
\#\left\{\text{prime ideals } \p : N(\p)\leq x \text{ and } \a^{N(\p)-1}\not\equiv 1 \mod{\p^2}\right\} \gg_{K, \a, \eps} \frac{\log x}{\log\log x}
\]
 as $x\to \infty$. 
\vskip 10pt
\noindent{\bf Keywords.} Ankeny--Artin--Chowla conjecture, Mordell conjecture, Siegel's theorem,  Wieferich primes, $abc$-conjecture for number fields.\newline 
{\bf MSC 2020.} 11R04 (Primary); 	11N69, 11G05 (Secondary)} 


\section{Introduction}

Given a fixed non-zero integer $a \neq \pm 1$, a \textit{base-$a$ Wieferich prime} is a prime number $p$ (not dividing $a$) such that
\[
a^{p-1}\equiv 1 \mod{p^2}.
\]
When $a=2$, these numbers were classically studied by A. Wieferich in 1909 in connection with Fermat's Last Theorem \cite{Wieferich1909}.  Despite this classical connection, Wieferich primes still remain a mystery.  Part of the difficulty in understanding base-$a$ Wieferich primes is their sparsity.  Assuming that the quotient $\frac{a^{p-1}-1}{p}$ behaves like a random integer, the ``probability" it is divisible by $p$, is $1/p$. Therefore, the number of primes less than $x$ that are base-$a$ Wieferich primes is expected to be at most $O(\log\log x)$. In particular, for a fixed integral base, we would expect the density of base-$a$ Wieferich primes to have a natural density zero among the set of all primes.  We refer the interested reader to \cite{Crandall1997,Gras2016a, Gras2016b, Gras2018, Katz2015} for further discussions on probabilistic models concerning Wieferich primes and other algebraic quantities. \newline

A \textit{base-$a$ non-Wieferich prime} $p$ is a prime for which $a^{p-1}\not \equiv 1\mod{p^2}$. Given the above heuristic, we would expect that for any fixed base $a$, almost all primes are base-$a$ non-Wieferich primes. There is currently no unconditional result in this direction. The approach that has received the most attention to date is J. Silverman's use of the $abc$-conjecture to prove that there are infinitely many base-$a$ non-Wieferich primes in a quantitative form by obtaining a lower bound of $\log x$ for the number of primes less than $x$ that are base-$a$ non-Wieferich primes \cite{Silverman1988}. There have been several works that obtain the infinitude of non-Wieferich primes under weaker hypotheses than the $abc$-conjecture (see for example \cite{Granville1986, Murty2019, Schinzel1976}). \newline 

Using a variation of Silverman's argument, H. Graves and M. R. Murty \cite{Graves2013} proved an analogous conditional result with a weaker lower bound for base-$a$ Wieferich primes lying in certain arithmetic progressions. Recently, Y. Ding in \cite{Ding2019} has improved upon the result of Graves and Murty to obtain a lower bound of $\log x$.  \newline

Let $K/\Q$ be a finite algebraic number field and $\O_K$ denote its ring of integers. For $x\in \O_K$, denote by $N(x)=|N_{K/\Q}(x)|$ the absolute value of the norm of $x$. We will call an element $\a\in \O_K$  an \textit{admissible Wieferich base} if it is non-zero and not a root of unity. In analogy with the integer case, a non-zero prime ideal $\p\subseteq \O_K$ is a \textit{base-$\a$ Wieferich prime} if 
\[
\a^{N(\p)-1}\equiv 1 \mod{\p^2}.
\]
As with the integer case, we would expect the number of base-$\a$ Wieferich primes to be sparse. Indeed, by Lagrange's theorem, $N(\a^{N(\p)-1}-1)/N(\p)$ is an integer. If we assume that these quotients are randomly distributed among the residue classes $\mod{N(\p)}$, then the probability that it is the zero class is $1/N(\p)$. By Merten's theorem for number fields we deduce that 
\begin{align*}
    \# \{\text{prime ideals $\p\subset \O_K$} : N(\p) \leq x \text{ and } \a^{N(\p)-1} \equiv 1\mod{\p^2} \}\\ \approx \sum_{N(\p)\leq x} \frac{1}{N(\p)} =O( \log \log x)
\end{align*}
as $x\to \infty$. 
\subsection{Statement of results}

In contrast to the integer case which was motivated by investigations into Fermat's last theorem, the study of Wieferich primes in number fields has historically been driven by their connection to Euclidean algorithms \cite{Clark1992, Clark1995, Harper2004, Murty2018}. The present work was motivated by the following observation about the fundamental units of real quadratic fields (cf. Proposition 3.2 of \cite{Bouazzaoui2020}).  
\begin{thm}\label{thm: AACM thm}
     Let $d>2$ be a square-free positive integer, $K= \Q(\sqrt{d})$ be the real quadratic field of discriminant $\delta^2d$ with $\delta = 1$ if $d\equiv 1 \mod{4}$ and $\delta=2$ if $d=2,3\mod{4}$, and 
\[
\fun = \frac{\delta}{2} \left(t+ u\sqrt{d} \right)
\]
be the fundamental unit of $K$. Then $d\nmid u$ if and only if for some odd prime divisor $p$ of $d$, 
\[
\fun^{p-1} \not\equiv 1 \mod{\p^2}
\]
where $\p\cap \Z = p$ is the prime lying above $p$ in $\O_K$. In particular, $\p$ is a base-$\fun$ non-Wieferich prime. 
\end{thm}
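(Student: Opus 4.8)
The plan is to convert the Wieferich congruence at each odd ramified prime into a divisibility condition on $u$, and then read the theorem off the factorisation of $d$. Fix an odd prime $p\mid d$. Since $d$ is squarefree and $p$ is odd, $p$ ramifies in $K$, so $p\O_K=\p^2$ and $N(\p)=p$; in particular $N(\p)-1=p-1$, and the relevant modulus is exactly $\p^2=p\O_K$. Because $\fun$ is a unit it is invertible modulo $\p^2$, so the statement
\[
\fun^{p-1}\equiv 1 \mod{\p^2}
\]
is equivalent to $\fun^{p}\equiv\fun\mod{p\O_K}$.

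The key step is to compute $\fun^{p}$ in the ring $R=\O_K/p\O_K$, where $x\mapsto x^{p}$ is a ring homomorphism. Since $p\mid d$ and $p$ is odd we have $(\sqrt d)^{p}=d^{(p-1)/2}\sqrt d\in p\O_K$, i.e.\ $(\sqrt d)^{p}\equiv 0\mod{p\O_K}$. Clearing the denominator $2$ (a unit in $R$, as $p$ is odd) and using Fermat for the rational integers $\delta,t,u$ together with $2^{p}\equiv 2\mod{p}$, I obtain
\[
2\fun^{p}\equiv (2\fun)^{p}=\left(\delta t+\delta u\sqrt d\right)^{p}\equiv \delta t \mod{p\O_K}.
\]
Subtracting $2\fun=\delta t+\delta u\sqrt d$ gives $2(\fun^{p}-\fun)\equiv-\delta u\sqrt d\mod{p\O_K}$, and since $2\delta$ is prime to $p$ the Wieferich congruence holds if and only if $u\sqrt d\equiv 0\mod{p\O_K}$.

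To finish the local analysis I would show $u\sqrt d\in p\O_K$ if and only if $p\mid u$: the containment is equivalent to $u\sqrt d/p\in\O_K$, and taking norms, $N_{K/\Q}(u\sqrt d/p)=-u^{2}d/p^{2}\in\Z$ forces $p^{2}\mid u^{2}d$, which, because $\ord_p(d)=1$, says precisely $p\mid u$; conversely $p\mid u$ gives $u\sqrt d/p\in\Z\sqrt d\subseteq\O_K$. This establishes the local dictionary, valid for every odd $p\mid d$:
\[
\fun^{p-1}\equiv 1 \mod{\p^2}\iff p\mid u .
\]
In particular $\fun^{p-1}\not\equiv 1\mod{\p^2}\iff p\nmid u$, which is the non-Wieferich assertion once such a $p$ is exhibited.

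Finally I would assemble the global statement. By the dictionary, the existence of an odd $p\mid d$ with $\fun^{p-1}\not\equiv 1\mod{\p^2}$ is equivalent to the existence of an odd prime $p\mid d$ with $p\nmid u$. When $d$ is odd (which includes the AAC case $d\equiv 1\mod{4}$), $d$ is the product of its distinct odd prime factors, so this is immediately equivalent to $d\nmid u$. The one delicate point — and where I expect the real work to lie — is the prime $2$ when $d\equiv 2\mod{4}$: there I must rule out the scenario in which every odd prime dividing $d$ divides $u$ while $2\nmid u$, i.e.\ show that $d_{\mathrm{odd}}\mid u$ forces $2\mid u$. I would attack this $2$-adically through the norm equation $t^{2}-du^{2}=\pm 1$, using that $t$ is necessarily odd, that $u$ can be odd only when $d\equiv 2\mod{8}$ and $N(\fun)=-1$, and then combining the minimality of the fundamental solution with $d_{\mathrm{odd}}\mid u$ to force the required parity. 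It is this parity bookkeeping at $2$, rather than the Frobenius computation, that I anticipate being the main obstacle.
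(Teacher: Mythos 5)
Your local computation is correct, and it is in substance the same argument as the paper's: where the paper expands $(t+u\sqrt{d})^n$ binomially and reduces modulo $p$, you run the Frobenius $x\mapsto x^p$ on $\O_K/p\O_K$; both land on the same prime-by-prime dictionary, namely that for each odd prime $p\mid d$,
\[
\fun^{p-1}\equiv 1 \mod{\p^2} \iff p\mid u,
\]
and your verification that $u\sqrt{d}\in p\O_K$ if and only if $p\mid u$ is sound. So for odd $d$ --- in particular for $d=p$, the AAC and Mordell cases that motivate the theorem --- your proof is complete, and arguably cleaner than the paper's.

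The genuine gap is the step you defer, and it is worse than you anticipate: it is not ``parity bookkeeping,'' and the attack you sketch cannot close it. As you correctly observe, if $d\equiv 2 \mod{8}$ and $N_{K/\Q}(\fun)=-1$, then $u$ is automatically odd, so $d\nmid u$ holds for free; the theorem then demands an odd prime $p\mid d$ with $p\nmid u$, i.e.\ that $m\nmid u$ where $d=2m$. Writing $u=mv$, ruling this out means showing that the fundamental unit $\fun=t+u\sqrt{2m}=t+v\sqrt{2m^3}$ can never lie in the suborder $\Z[\sqrt{2m^3}]=\Z+m\O_K$ of conductor $m$, equivalently that the unit index $[\O_K^\times:(\Z+m\O_K)^\times]$ is never $1$. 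That is precisely an Ankeny--Artin--Chowla-type assertion --- for $d=p$ prime it \emph{is} the AAC/Mordell assertion, which Reinhart's counterexamples show can fail --- so no congruence or minimality argument of the kind you propose can establish it: all congruence conditions are satisfiable (every prime of $mv$ is $\equiv 1 \mod{4}$, consistent with $m\equiv 1\mod{4}$), and nothing in the theory of the Pell equation prevents the smallest norm $-1$ unit from lying in the conductor-$m$ suborder; heuristically this should fail for rare even $d$. You should also know that the paper's own proof is silent on this point: it proves only the displayed dictionary (``it will suffice to show\ldots'') and treats the passage from ``every odd prime of $d$ divides $u$'' to ``$d\mid u$'' as immediate. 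That passage is immediate when $d$ is odd, and when $d$ is even with $N_{K/\Q}(\fun)=+1$ (where $u$ is forced to be even), but in the case $d\equiv 2\mod{8}$, $N_{K/\Q}(\fun)=-1$ it is an unproved assertion of AAC type. So your proposal has in fact located a real gap in the paper's theorem for even $d$; it just cannot be filled by the means you suggest. Restricted to odd $d$, or to even $d$ with $N_{K/\Q}(\fun)=+1$, both your argument and the paper's are complete.
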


When $d=p$ is an odd rational prime, N. Ankeny, E. Artin, and S. Chowla \cite{Ankeny1951, Ankeny1952} (when $p\equiv 1\mod{4}$) and independently L. J. Mordell \cite{mordell1961} (when $p\equiv 3\mod{4}$) conjectured that $p\nmid u$. Recently, A. Reinhart \cite{reinhart2024b, reinhart2024} has found primes that violate both of these conjectures. Counterexamples for square-free composite numbers are well-known \cite{Mollin1986}. For a discussion on the results of Ankeny, Artin, and Chowla, as well as interpretations of the known counterexamples, we refer the reader to \cite{Fellini2025}.\newline

If we denote the fundamental unit of $\Q(\sqrt{p})$ by $\fun_p$ and assume that the quotients $N(\fun^{p-1}-1)/p$ are randomly distributed $\mod{p}$, then as above we would expect that
\begin{align*}
    \#\{p\leq x : \p \text{ is ramified and a base-$\fun_p$ Wieferich prime in $\Q(\sqrt{p})$}\}\\ \approx \sum_{p\leq x} \frac{1}{p} = O(\log \log x)
\end{align*}
as $x\to \infty$. Theorem \ref{thm: AACM thm} then implies that this also estimates the number of counterexamples to the Ankeny--Artin--Chowla and Mordell conjectures. Moreover, this estimate agrees with the heuristic argument of L. Washington \cite{Washington}.
\newline
 
Given the expected similarities between the rational and number field settings it seems fruitful to formulate and prove infinitude results in the spirit of the results mentioned in the introduction. Our first result is a direct qualitative generalization of Silverman's result.  

\begin{thm}\label{thm: thm 1}
    Let $K$ be an algebraic number field and $\alpha\in \O_K$ be an admissible Wieferich base. Assuming Masser's $abc$-conjecture for number fields, there are infinitely many prime ideals $\p \subset \O_K$ such that 
    \[
    \a^{N(\p)-1} \not\equiv 1 \mod{\p^2}. 
    \]
\end{thm}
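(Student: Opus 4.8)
The plan is to transplant Silverman's $abc$-argument to $\O_K$, with two adjustments forced by the number field setting: valuations must be read off prime ideals rather than rational primes, and---since $\a$ may be a unit---multiplicative growth must be measured by the Weil height rather than by the norm.

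First I would record the reduction step. Fix a prime $\p\nmid \a$ and let $d=\ord_\p(\a)$ be the multiplicative order of $\a$ in $(\O_K/\p)^\times$, so that $d\mid N(\p)-1$. Writing $N(\p)-1=dm$, the cofactor $m=(N(\p)-1)/d$ divides $N(\p)-1$, which is coprime to the residue characteristic $p$ (as $N(\p)$ is a power of $p$). Since $\a^d\equiv 1\mod{\p}$, the factorization $\a^{dm}-1=(\a^d-1)(1+\a^d+\cdots+\a^{d(m-1)})$ has second factor $\equiv m\mod{\p}$, a unit modulo $\p$; hence $v_\p(\a^{N(\p)-1}-1)=v_\p(\a^d-1)$, with no hypothesis on ramification. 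The same identity, together with $\a^d-1\mid \a^n-1$ when $d\mid n$, shows $v_\p(\a^n-1)\ge v_\p(\a^d-1)\ge 1$ whenever $\p\mid \a^n-1$. Consequently, if $\p$ exactly divides $\a^n-1$ for some $n$ (that is, $v_\p(\a^n-1)=1$), then $v_\p(\a^d-1)=1$, so $\a^{N(\p)-1}\not\equiv 1\mod{\p^2}$: such a $\p$ is a base-$\a$ non-Wieferich prime. This reduces the theorem to producing prime ideals that exactly divide some $\a^n-1$.

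To produce them, set $\beta_n=\a^n-1$ and split the ideal $(\beta_n)=\mathfrak{M}_n\mathfrak{N}_n$, where $\mathfrak{M}_n=\prod_{v_\p(\beta_n)=1}\p$ collects the exact divisors and $\mathfrak{N}_n=\prod_{v_\p(\beta_n)\ge 2}\p^{v_\p(\beta_n)}$ collects the rest. Then $\rad((\beta_n))=\mathfrak{M}_n\,\rad(\mathfrak{N}_n)$ with $N(\rad(\mathfrak{N}_n))\le N(\mathfrak{N}_n)^{1/2}\le N(\beta_n)^{1/2}$, since each prime of $\mathfrak{N}_n$ occurs to a power at least $2$. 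I would then apply Masser's $abc$-conjecture to the coprime triple $\a^n+(1-\a^n)+(-1)=0$; the relevant radical is $\rad(\a^n(\a^n-1))=\rad(\a)\,\rad(\beta_n)$ because $\a$ and $\a^n-1$ generate coprime ideals, so the conjecture gives
\[
H_K(\a^n:\a^n-1:1)\ll_{K,\eps}\big(N(\mathfrak{M}_n)\,N(\beta_n)^{1/2}\big)^{1+\eps}.
\]
On the left, $H_K(\a^n:\a^n-1:1)\ge H_K(\a^n)=H_K(\a)^n$, and since $\a$ is an algebraic integer that is not a root of unity, Kronecker's theorem gives $H_K(\a)>1$; moreover $N(\beta_n)\le H_K(\beta_n)\ll_K H_K(\a)^n$ because $\beta_n$ is integral. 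Writing $L=H_K(\a)>1$ and absorbing constants, these bounds combine to $L^n\ll_{K,\a,\eps}N(\mathfrak{M}_n)^{1+\eps}L^{n(1+\eps)/2}$, whence
\[
N(\mathfrak{M}_n)\gg_{K,\a,\eps}L^{\,n(1/2-\eps')}.
\]
In particular $N(\mathfrak{M}_n)\to\infty$, so $\a^n-1$ acquires exact prime divisors of arbitrarily large norm.

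Finally I would conclude by contradiction. If there were only finitely many base-$\a$ non-Wieferich primes $\p_1,\dots,\p_r$, then every $\mathfrak{M}_n$, being a squarefree product of such primes, would divide $\p_1\cdots\p_r$, forcing $N(\mathfrak{M}_n)\le\prod_i N(\p_i)$---contradicting $N(\mathfrak{M}_n)\to\infty$. Hence there are infinitely many. The main obstacle I anticipate is the bookkeeping around Masser's inequality: one must use the projective height formulation rather than the norm, so that the left-hand side still grows in the unit case $N(\a)=1$, and one must carefully match the archimedean normalization of $H_K$ against the absolute norm $N(\beta_n)$ in the two estimates above, keeping track of the discriminant-dependent constant implicit in the conjecture for $K$. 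The ideal-theoretic lifting-the-exponent reduction, by contrast, is elementary once phrased through the cofactor $m$ being prime to $p$.
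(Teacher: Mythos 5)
Your proof is correct, and on the decisive point it takes a genuinely different route from the paper. The skeleton is shared: both arguments split $(\a^n-1)$ into a squarefree part times a powerful part (your $\mathfrak{M}_n\mathfrak{N}_n$ is the paper's $U_nV_n$), both apply the $abc$-conjecture to $1+(\a^n-1)=\a^n$ with the powerful part contributing only $N(\mathfrak{N}_n)^{1/2}$ to the support, and both finish with the observation that a prime exactly dividing some $\a^n-1$ is non-Wieferich (your lifting-the-exponent reduction through the cofactor $m$ is in fact cleaner and more uniform than the paper's cyclotomic-ideal route, Lemmas \ref{lem: exact order}--\ref{lem: non-wieferich criteria}, which is developed for odd unramified primes). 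The divergence is in how growth is forced on the left-hand side of the $abc$ inequality. The paper bounds the height below by $\max(N(a),N(b),N(c))$ (Lemma \ref{lem: lower bound for height}); this degenerates when $\a$ is a unit, since then $N(\a^n)=1$, so the paper must separately prove that $N(\a^n-1)\to\infty$ --- the ``norm condition'' it highlights as the essential subtlety, established in Section \ref{sec: an application of Siegel's theorem} via Siegel's theorem on integral points. You instead bound the height below by $H_K(\a^n)=H_K(\a)^n$ and invoke Kronecker's theorem to get $H_K(\a)>1$ for every admissible base, unit or not. This yields $N(\mathfrak{M}_n)\gg H_K(\a)^{cn}\to\infty$ directly: Siegel's theorem is bypassed entirely, the argument is more elementary and self-contained, and you even obtain an exponential quantitative lower bound where the paper's contradiction argument yields none. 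One bookkeeping correction: Masser's conjecture is formulated with the ramified support $\prod_{\p}N(\p)^{e(\p|p)}$ rather than the plain radical norm, so your inequality should carry $N(\mathfrak{M}_n)^{[K:\Q]}$ in place of $N(\mathfrak{M}_n)$, and the powerful part picks up a constant accounting for the finitely many ramified primes, exactly as in Lemma \ref{lem: bounds for support}; as written, your bound is the one furnished by Elkies' divisor-support formulation (which the paper notes gives a simplified proof). This correction only rescales exponents by $[K:\Q]$ and leaves your conclusion untouched.
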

In the proof of Theorem \ref{thm: thm 1}, we will see that it is essential that $N(\a^n-1)\to \infty$ as $n\to \infty$. In the classical setting, verifying this is a routine calculus exercise. For a general number field, this norm condition is comparatively subtle. In Section \ref{sec: an application of Siegel's theorem}, we will prove, using Siegel's theorem on integral points, that any admissible Wieferich base satisfies this norm condition.   \newline 

There are several known cases of Theorem \ref{thm: thm 1} in the literature \cite{Graves2025, Ichimura1998, Srinivas2018}.  In the case of \cite{Ichimura1998}, it is clear that the author had such a norm condition in mind, but the given explanation overlooks the delicate nature of the quantities involved. Indeed, their inequality (2) is far from obvious and seems to presuppose that there are only finitely many solutions to the equation $u+v=1$ in $\O_K^\times$, a fact that is a straightforward consequence of Siegel's theorem. In \cite{Srinivas2018}, the authors impose Galois theoretic conditions  (as well as a class number one assumption) on the bases considered to bypass the norm condition. These restrictions allow only specific units to be considered. \newline  

While preparing this manuscript, we were made aware of a preprint by H. Graves and B. Weiss \cite{Graves2025}. In this paper, the norm condition is bypassed using Galois theoretic hypotheses and additional bounds for the norm under these conditions. Similar to \cite{Srinivas2018}, these hypotheses restrict the bases that can be considered. Notably, the Theorem 1 of \cite{Graves2025} contains a gap when the bases considered are units. Indeed, their Lemma 6 gives, in essence, a trivial lower bound which doesn't force any growth for the norms of the ideals considered. The growth of these norms is essential for proving infinitude. \newline

We will call a prime ideal $\p$ a \textit{base-$\a$ super-Wieferich prime} if 
\[
\a^{N(\p)-1} \equiv 1 \mod{\p^3}.
\]
Following the heuristic in the beginning of the introduction, the number of base-$\a$
super-Wieferich primes is expected to be bounded. Following the work of M. R. Murty and F. S\'eguin \cite{Murty2019}, we obtain the following.  
\begin{thm}\label{thm: 3}
    Let $K$ be an algebraic number field and $\alpha\in \O_K$ be an admissible Wieferich base. If there are finitely many base-$\a$ super-Wieferich primes, then there are infinitely many base-$\a$ non-Wieferich primes. 
\end{thm}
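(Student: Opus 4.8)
The plan is to replace the defining congruences by an intrinsic $\p$-adic invariant and then count over prime exponents. First I would record a reformulation of all three notions. For a prime ideal $\p\nmid\a$, let $e_\p$ be the order of $\a$ in $(\O_K/\p)^\times$ and put $v_\p=\ord_\p(\a^{e_\p}-1)\ge 1$. Writing $\a^{e_\p}=1+\omega\varpi^{v_\p}$ with $\varpi$ a uniformizer at $\p$ and $\omega$ a $\p$-unit, the binomial expansion of $(\a^{e_\p})^{k}$ with $k=(N(\p)-1)/e_\p$ gives $\ord_\p(\a^{N(\p)-1}-1)=v_\p+\ord_\p(k)$. Since $N(\p)=p^{f}$ for the rational prime $p$ below $\p$ and $p\nmid p^{f}-1$, we have $p\nmid k$, so $\ord_\p(k)=0$ and the leading term dominates: $\ord_\p(\a^{N(\p)-1}-1)=v_\p$. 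Hence $\p$ is a base-$\a$ Wieferich prime exactly when $v_\p\ge 2$, and a base-$\a$ super-Wieferich prime exactly when $v_\p\ge 3$. In particular the hypothesis says $v_\p\le 2$ for all but finitely many $\p$.

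I would then work with the elements $\a^{\ell}-1$ for $\ell$ a rational prime. A prime $\p\mid\a^{\ell}-1$ has $e_\p\in\{1,\ell\}$: those with $e_\p=1$ divide $\a-1$ and, for all large $\ell$, contribute the fixed ideal $\mathfrak a=\prod_{\p\mid\a-1}\p^{v_\p}$; the primitive primes $e_\p=\ell$ satisfy $N(\p)\equiv 1\mod{\ell}$, hence have norm $>\ell$ and carry $\ord_\p(\a^{\ell}-1)=v_\p$. Because $N(\a^{n}-1)\to\infty$ (which I would take from the norm bound proved via Siegel's theorem in the following section), primitive primes exist for all large $\ell$. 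If some primitive prime has $v_\p=1$ it is, by the first step, a base-$\a$ non-Wieferich prime with $e_\p=\ell$; as $\ell$ ranges over primes these primes are distinct and have norm at most $N(\a^{\ell}-1)$.

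The argument then rests on a dichotomy: call $\ell$ \emph{good} if $\a^{\ell}-1$ has a primitive prime with $v_\p=1$, and \emph{bad} otherwise. For a large bad $\ell$ the hypothesis forces $v_\p=2$ on every primitive prime, so $(\a^{\ell}-1)=\mathfrak a\,\b^{2}$ with $\mathfrak a$ fixed; passing through the finite class group and the finite group $\O_K^\times/(\O_K^\times)^2$ converts this into $\a^{\ell}-1=c\,\xi^{2}$ for some $c$ in a fixed finite subset of $K^\times$ and some $\xi\in K^\times$. The key point is that this equation has only finitely many solutions $\ell$, so there are only finitely many bad primes. Granting this, infinitely many primes are good, which already yields infinitely many non-Wieferich primes and hence the theorem. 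For the quantitative refinement I would take $N\asymp\log x$, using the elementary bound $\log N(\a^{n}-1)\ll n$ to guarantee $N(\a^{\ell}-1)\le x$ for all primes $\ell\le N$; then every prime produced has norm at most $x$, and the good primes among $\ell\le N$ number at least $\pi(N)-O(1)\gg \log x/\log\log x$.

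The main obstacle is the finiteness of solutions of $\a^{\ell}-1=c\,\xi^{2}$. For fixed $c$ and odd $\ell$, substituting $\a^{\ell}=\a\,(\a^{(\ell-1)/2})^{2}$ rewrites this as an $S$-integral point on the conic $\a X^{2}-cY^{2}=1$ subject to $X$ being a power of $\a$, and in the nondegenerate case finiteness follows from Siegel's theorem (equivalently, the $S$-unit equation). The degenerate conics, where $\a c$ is a square in $K$ and the form splits, must be treated separately---there one rules out the two lacunary sequences $\{\a^{t}\}$ and the Pell solutions from coinciding infinitely often via linear forms in logarithms. Minor additional care is required in the $\p$-adic computation at ramified primes and at residue characteristic $2$, and in the class-group and unit bookkeeping that produces the finite set of constants $c$.
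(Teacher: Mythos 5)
Your overall architecture genuinely mirrors the paper's proof: you reduce Wieferich/super-Wieferich to the invariant $v_\p=\ord_\p(\a^{e_\p}-1)$ (the paper's $\delta_\a(\p)$, via its Lemmas on $\ord_\p(C_n)$), isolate primitive prime divisors of $\a^\ell-1$ for prime exponents $\ell$ (the paper's set $T$ and the factorization of $C_q$), use the super-Wieferich hypothesis to force the primitive part to be the square of an ideal, pass through the class group and $\O_K^\times/(\O_K^\times)^2$ to get $\a^\ell-1=c\,\xi^2$ with $c$ in a fixed finite set (the paper does this by localizing at a set $S$ with $\O_{K,S}$ a PID), and then appeal to a Siegel-type finiteness statement. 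The difference is in how that last finiteness is extracted, and that is where your proposal has a genuine gap.

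You write $\a^{\ell}=\a\,(\a^{(\ell-1)/2})^{2}$ and claim that ``in the nondegenerate case finiteness follows from Siegel's theorem'' applied to $S$-integral points on the conic $\a X^{2}-cY^{2}=1$. This is false as stated: Siegel's theorem gives finiteness of $S$-integral points only for curves of genus at least $1$, or genus $0$ curves with at least three points at infinity. Your conic has genus $0$ and exactly two points at infinity — it is a Pell conic — and such conics typically have \emph{infinitely} many $S$-integral points (e.g.\ $2X^{2}-Y^{2}=1$ over $\Z$, where $2c$ is not a square), regardless of whether $\a c$ is a square in $K$. So the unconstrained finiteness you invoke in the nondegenerate case does not exist; the constraint that $X$ is a power of $\a$ must carry the argument in \emph{both} cases, not only the split one. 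Two repairs are available. (a) Keep your squares but argue via the unit equation: over $L=K(\sqrt{\a c})$, with $X=\a^{t}$ and $\mu=\sqrt{c/\a}$, the two factors $\a^{t}\pm\mu\xi$ are $S_L$-integers whose product is $1/\a$, hence (after enlarging $S_L$ by the finitely many primes dividing $2$, $\a$, $c$, and the class-group representatives) both are $S_L$-units; dividing their sum $2\a^{t}$ by itself gives a two-term $S_L$-unit equation $x+y=1$, which has finitely many solutions, and then $xy=1/(4\a^{2t+1})$ together with $\a$ not being a root of unity bounds $t$. (b) Do what the paper does: decompose the exponent modulo $3$ instead of $2$, writing $\a^{\ell}=\a^{\beta}(\a^{m})^{3}$ with $\beta\in\{1,2\}$, so the equation becomes $cY^{2}=\a^{\beta}X^{3}-1$, a genuine elliptic curve, to which Siegel's theorem on $S$-integral points applies directly. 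Either repair closes the gap (and also removes the need for linear forms in logarithms in your degenerate case); as written, the central finiteness claim of your proposal is unjustified.
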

The interested reader should consult J. Voloch's article \cite{Voloch2000} for a complementary result. \newline

In both settings of Theorem \ref{thm: thm 1} and Theorem \ref{thm: 3}, we can obtain the following lower bound. 
\begin{thm} \label{thm: thm 2}
    Let $K$ be an algebraic number field and $\alpha\in \O_K$ be an admissible Wieferich base. If the $abc$-conjecture is true or there are finitely many base-$\a$ super-Wieferich primes then  
    \[
   \# \{\p \in O_K: N(\p) \leq x \text{ and } \a^{N(\p)-1} \not\equiv 1\mod{\p^2}\} \gg \frac{\log x}{\log\log x}
    \] 
    as $x\to \infty$. Here, the implied constant can depend on the field $K$, the base $\a$, and in the case of the $abc$-conjecture, $\eps$. 
\end{thm}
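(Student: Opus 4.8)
The plan is to produce many \emph{distinct} non-Wieferich primes of norm at most $x$ by examining primitive prime divisors of $\alpha^\ell-1$ as $\ell$ runs over rational primes up to a suitable bound. First I would record the local criterion underlying the whole argument: if $d_\p=\ord_\p(\alpha)$ denotes the multiplicative order of $\alpha$ modulo $\p$, then $d_\p\mid N(\p)-1$ while $\p\nmid N(\p)-1$, so a lifting-the-exponent computation in $\O_K$ gives $v_\p(\alpha^{N(\p)-1}-1)=v_\p(\alpha^{d_\p}-1)$; hence $\p$ is a base-$\alpha$ non-Wieferich, Wieferich, or super-Wieferich prime according as $v_\p(\alpha^{d_\p}-1)$ equals $1$, is $\geq 2$, or is $\geq 3$. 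I would also record the easy Archimedean upper bound $\log N(\alpha^n-1)\leq c_2\,n$ for a constant $c_2=c_2(K,\alpha)$, obtained by bounding each conjugate $|\sigma(\alpha)^n-1|$.

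Now fix $L=\lfloor (\log x)/c_2\rfloor$ and let $\ell\leq L$ be a rational prime. For such $\ell$ the factor $\Phi_\ell(\alpha)=(\alpha^\ell-1)/(\alpha-1)$ has the convenient feature that any prime $\p\mid\Phi_\ell(\alpha)$ with $\p\nmid\ell$ satisfies $d_\p\mid\ell$ and $d_\p\neq 1$, hence $d_\p=\ell$; such a $\p$ is a \emph{primitive} divisor, and primitive divisors attached to distinct $\ell$ are automatically distinct. Since $\log N(\Phi_\ell(\alpha))\to\infty$ (by the Siegel-type growth of $N(\alpha^n-1)$ established for Theorem~\ref{thm: thm 1}), for all large $\ell$ the ideal $(\Phi_\ell(\alpha))$ has at least one primitive divisor, and every primitive divisor $\p$ satisfies $N(\p)\leq N(\alpha^\ell-1)\leq e^{c_2\ell}\leq x$. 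The heart of the proof is to show that for all but finitely many such $\ell$ one of these primitive divisors is \emph{non-Wieferich}, i.e. satisfies $v_\p(\alpha^\ell-1)=1$. Granting this, the primes so produced are distinct and of norm $\leq x$, and their number is $\geq\pi(L)-O(1)\gg (\log x)/\log\log x$; the restriction to prime exponents $\ell$ (forced by the clean order structure above) is exactly what turns the count $L\asymp\log x$ into $\pi(L)\asymp(\log x)/\log\log x$.

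It remains to find, for large $\ell$, a primitive divisor of valuation $1$, and here the two hypotheses enter. Suppose to the contrary that every primitive divisor of $\alpha^\ell-1$ is Wieferich, so that each occurs with $v_\p(\alpha^\ell-1)=v_\p(\alpha^{d_\p}-1)\geq 2$. Under Masser's $abc$-conjecture applied to $\alpha^\ell+(1-\alpha^\ell)=1$ in the height-normalised projective form (so that it applies even when $\alpha$ is a unit), the ``powerful part'' is negligible: $\sum_{\p\mid\alpha^\ell-1}(v_\p(\alpha^\ell-1)-1)\log N(\p)\leq \eps\,\log N(\alpha^\ell-1)+O(1)\leq \eps c_2\ell+O(1)$. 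But the assumption forces this sum to be $\geq\tfrac12\log N(\Phi_\ell(\alpha))\gg \ell$, a contradiction for $\eps$ small and $\ell$ large. Under the alternative hypothesis that there are only finitely many base-$\alpha$ super-Wieferich primes, note first that a super-Wieferich prime is primitive for at most one $\ell$, so for all large $\ell$ every primitive Wieferich divisor has $v_\p(\alpha^\ell-1)=2$ exactly; the contrary assumption then makes the primitive part of $(\alpha^\ell-1)$ a perfect square ideal up to a bounded factor supported on $\ell$. Reducing modulo the finite group $\O_K^\times/(\O_K^\times)^2$ and the finite class group shows that $\Phi_\ell(\alpha)$ would then lie in one of finitely many square classes, so that $\delta\,\Phi_\ell(\alpha)=\square$ in $K^\times$ for some $\delta$ from a fixed finite set.

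The main obstacle is precisely this last point: I must rule out that $\delta\,(\alpha^\ell-1)/(\alpha-1)$ is a perfect square for infinitely many primes $\ell$, \emph{uniformly in the exponent} $\ell$. This is the analogue of the classical fact that $(a^\ell-1)/(a-1)$ is a perfect square for only finitely many $\ell$, and I would obtain it from a Siegel/Shorey--Tijdeman--type finiteness theorem for perfect powers in the sequence $\Phi_\ell(\alpha)$. As with the norm condition behind Theorem~\ref{thm: thm 1}, the delicate feature is uniformity over the growing exponent rather than finiteness for each fixed $\ell$ (which is just Siegel's theorem on the hyperelliptic curve $w^2=\delta(\alpha-1)(\alpha^\ell-1)$). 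Once the bad exponents are shown to be finite, the count assembled in the second paragraph yields the stated lower bound in both cases.
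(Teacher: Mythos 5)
Your counting skeleton --- prime exponents $\ell$, primitive divisors of $\Phi_\ell(\a)$ (automatically distinct for distinct $\ell$), and a Chebyshev bound $\pi\!\left(\frac{\log x}{c_2}\right)\gg \frac{\log x}{\log\log x}$ --- is exactly the paper's, and your local criterion and Archimedean bound play the roles of the paper's Lemmas 6.3/6.4 and 4.2. The gaps are in the decisive step, that all but finitely many prime $\ell$ admit a primitive non-Wieferich divisor, and they occur in both branches. In the $abc$ branch your contradiction rests on $\log N(\Phi_\ell(\a))\gg\ell$, which you attribute to ``the Siegel-type growth established for Theorem 1.'' But Proposition 5.4 / Corollary 5.5 give only $N(\a^n-1)\to\infty$ with \emph{no rate}; a rate such as $\gg\ell$ is precisely what is not elementary when $\a$ has conjugates on the unit circle (it is true, but requires Baker-type lower bounds for linear forms in logarithms, which the paper deliberately avoids by using Siegel's theorem instead). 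Without the rate, your inequality $\eps c_2\ell+O(1)\geq\tfrac12\log N(\Phi_\ell(\a))$ produces no contradiction. The gap is repairable by arguing rate-free, as the paper does: if $\ell$ is bad then, since every prime divisor of $A_\ell=(\a^\ell-1)$ either divides $(\a-1)$ or is primitive (your own order argument), the square-free part $U_\ell$ is supported on divisors of $(\a-1)$, so $N(U_\ell)\leq N(\a-1)=O(1)$; the height lower bound (Lemma 4.1) and support bounds (Lemma 7.2) together with $abc$ then force $N(V_\ell)=O(1)$, hence $N(\a^\ell-1)=O(1)$, and Proposition 5.4 --- the bare ``$\to\infty$'' statement --- makes the set of bad $\ell$ finite.

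In the super-Wieferich branch you correctly reduce to showing that $\delta\,\Phi_\ell(\a)$ is a square in $K^\times$, with $\delta$ from a fixed finite set, for only finitely many primes $\ell$, and you correctly flag that the difficulty is uniformity in the exponent. But you then delegate this to an unnamed ``Siegel/Shorey--Tijdeman-type finiteness theorem.'' No off-the-shelf theorem of that kind applies over a number field with unit and class-group ambiguity; this step is exactly the content of the paper's proof of Theorem 3. There one writes $\ell=3q'+\beta$ with $\beta\in\{0,1,2\}$ and reduces the $S$-unit exponents modulo $2$, so that each bad $\ell$ produces a distinct $\O_{K,S}$-point, with $X=\a^{q'}$, on one of finitely many curves $(\a-1)AY^2=BX^3-1$; Siegel's theorem applied to each member of this finite family yields the finiteness. (The paper's own proof of Theorem 2.4 then simply quotes Theorems 2.2 and 2.3 to get $N(U_n)\to\infty$ and performs the count you describe.) In short: your reduction and your counting are right, but in both branches the key finiteness statement is asserted rather than proved.
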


\section{Connection with the Ankeny--Artin--Chowla and Mordell conjectures}\label{sec: AAC & Wieferich}
In this section, $d>2$ will denote a square-free integer, $\Q(\sqrt{d})$ is the real quadratic field of discriminant $\delta^2d$ with $\delta =1 $ if $d\equiv 1 \mod{4}$ and $\delta=2$ if $d\equiv 2, 3\mod{4}$, $p$ will denote any odd prime divisor of $d$, and $\p$ will denote the prime above $p$ in $\O_K$.  
Thus, $\p^2=p\O_K$.  We will write
\[
\fun = \frac{\delta}{2} \left(t+ u\sqrt{d}\right)
\]
for the fundamental unit of $\Q(\sqrt{d})$. Here $t$ and $u$ are integer solutions to the equation $t^2-du^2 =\pm 4$ chosen so that $t+u\sqrt{d}>1$ is minimal among all solutions when considered as a real number. 

\begin{proof}[Proof of Theorem \ref{thm: AACM thm}]
    Suppose $p$ is an odd prime divisor of $d$. Write $\fun^n = t_n + \sqrt{d}u_n$. It will suffice to show that $u_n\equiv 0 \mod{p}$ if and only if $p\mid u$ or $n\equiv 0\mod{p}$. Indeed, for any positive integer $n$, we have that 
    \[ t_n + \sqrt{d}u_n =
    \fun^n = \frac{\delta^n}{2^n}(t+u\sqrt{d})^n = \left(\frac{\delta}{2}\right)^n \sum_{j=0}^n { n\choose j } t^{n-j}(u\sqrt{d})^{j}.
    \]
    Splitting the sum into even and odd indices, we obtain
    \[
    \fun^n = \left(\frac{\delta}{2}\right)^n\left( \sum_{j=0}^{\lfloor \frac{n}{2} \rfloor} {n \choose 2j} t^{n-2j}d^ju^{2j} + \sqrt{d}\sum_{j=0}^{\lfloor \frac{n}{2} \rfloor} {n \choose 2j+1} t^{n-2j-1}d^j u^{2j+1}\right).
    \]
    Hence, 
    \[
    t_n =  \left(\frac{\delta}{2}\right)^n\left( \sum_{j=0}^{\lfloor \frac{n}{2} \rfloor} {n \choose 2j} t^{n-2j}d^ju^{2j} \right) \equiv \left(\frac{\delta t}{2}\right)^n \mod{p}
    \]
    and
    \[
    u_n = \left(\frac{\delta}{2}\right)^n\left(\sum_{j=0}^{\lfloor \frac{n}{2} \rfloor} {n \choose 2j+1} t^{n-2j+1}d^j u^{2j+1}\right) \equiv n\left(\frac{\delta t}{2}\right)^n \frac{u}{t} \mod{p}. 
    \]
    We see at once, that as $\gcd(t,p)=1$, $u_n\equiv 0\mod{p}$ if and only if $p \mid u$ or $n\equiv 0 \mod{p}$. In particular, for $n=p-1$ we see that $t_{p-1}\equiv 1 \mod{p}$ and $u_{p-1}\equiv-\frac{u}{t} \mod{p}$. Hence $u_{p-1}\equiv 0 \mod{p}$ if and only if $p\mid u$. Recalling that $\p^2=(p)$, we have
    \[
    \fun^{p-1}\equiv 1 \mod{\p^2}
    \]
    if and only if $p\mid u$. 
\end{proof}

\section{Heights and the $abc$-conjecture in number fields}\label{sec: the abc-conjecture}
Let $K$ be an algebraic number field. We will say that two absolute values $v_1$ and $v_2$ on $K$ are equivalent if and only if they generate the same topology on $K$. A place $v$ of $K$ is an equivalence class of absolute values under this relation. A \textit{finite place}, or non-Archimedean place, arises from the valuation associated to each prime ideal of $\O_K$. An \textit{infinite place}, or Archimedean place, arises from embeddings of $K$ into the complex numbers. If $\sigma$ is a real embedding, we call it a real infinite place. Each complex conjugate pair of embeddings $\{\sigma, \overline{\sigma}\}$ determine a complex infinite place. If $K/\Q$ has $r_1$ real embeddings and $2r_2$ complex embeddings, there are $r_1+r_2$ infinite places of $K$. We denote the set of all  finite places of $K$ by $M_K^0$ and the set of infinite places by $M_K^\infty$ so that the set of all places of $K$, $M_K$, is $M_K= M_K^0 \cup M_K^\infty$. \newline

We will denote by $N(a):= |N_{K/\Q}(a)|$ the absolute value of the norm of $a\in K$. For each (finite and infinite) place $v$ of $K$, we normalize the corresponding valuations so that for every $x\in K^\times$ the product formula 
\[
\prod_{v\in M_K} ||x||_v =1
\]
holds. Precisely, for each finite place $v$ of $K$ and corresponding prime ideal $\p_v$, we set $||x||_v = N(\p_v)^{-\operatorname{ord}_{\p_v}(x)}$ where $\operatorname{ord}_{\p_v}(x)$ is the exponent of the ideal $\p_v$ in the prime ideal factorization of principal ideal generated by $x$. For the infinite places,  we normalize so that $||x||_v = |x|_v^{d_v}$ where $d_v=1$ or $2$ according to whether $v$ is real or complex.\newline 

Unlike the $abc$-conjecture for $\Q$, there have been several proposed generalizations for number fields, notably, by P. Vojta \cite{Vojta1987}, N. Elkies \cite{Elkies1991}, G. Frey \cite{Frey1997}, and D. Masser \cite{Masser2002}. We will use the version proposed by Masser as it exhibits a nice functorial property under field extensions. We formulate Masser's $abc$-conjecture for number fields now. \newline 

Suppose that $K/\Q$ is an algebraic number field and that $a,b,c\in K^\times$. The \textit{height} of the tuple $(a,b,c)$ is the product
\[
H_K(a,b,c) = \prod_{v\in M_K} \max(||a||_v, ||b||_v, ||c||_v).
\]

The \textit{ramified support} of the tuple $(a,b,c)\in (K^{\times})^3$ is defined as
\[
\supp_K(a,b,c) = \prod_{\p \in \O_K} N(\p)^{e(\p|p)}
\]
where the product is over those prime ideals $\p$ for which $||a||_\p, ||b||_\p$, and $||c||_\p$ are all distinct and $e(\p|p) = \ord_\p(p)$ is the ramification index of $\p$ over $p$. The empty product will be interpreted simply as being $1$. Similarly, the \textit{ramified support} of an ideal $I\subseteq \O_K$ is
\[
\supp_K(I) := \prod_{\p\mid I}N(\p)^{e(\p|p)}.
\]

If $a,b,c\in \Q$, then it is well known that $H_K(a,b,c) = H_\Q(a,b,c)^{[K:\Q]}$. The benefit of the ramified support is that it shares this property. Indeed, it is a straightforward calculation (which can be found in \cite{Masser2002}) that  
if $a,b,c\in \Q$ then $\supp_K(a,b,c) = \supp_\Q(a,b,c)^{[K:\Q]}$. For both the height and support, similar identities hold for any finite extension of number fields $L/K$ \cite{Masser2002}. This is the functorial property we alluded to above. Masser's $abc$-conjecture for number fields asserts: 
\begin{conj}[$abc$-conjecture for number fields]\label{conj: abc-conjecture}
    Let $K/\Q$ be a number field with discriminant $\Delta_K$ and $\eps>0$. Suppose $a+b=c$ for $a,b,c\in K^\times$. Then there is a constant $C_{\eps}>0$ such that 
    \[
    H_K(a,b,c) \leq C_{\eps} (\Delta_K \supp_K(a,b,c))^{1+\eps}. 
    \]
\end{conj}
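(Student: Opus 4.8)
The statement above is Masser's $abc$-conjecture for number fields, one of the central open problems of Diophantine geometry; there is no unconditional proof, and what follows describes the only routes by which one could realistically attack it, together with the obstruction that keeps it open. The first thing to record is that the conjecture is genuinely \emph{at least as hard as} the classical $abc$-conjecture of Masser and Oesterl\'e. Indeed, taking $K=\Q$ and using the descent identities $H_K(a,b,c)=H_\Q(a,b,c)^{[K:\Q]}$ and $\supp_K(a,b,c)=\supp_\Q(a,b,c)^{[K:\Q]}$ recorded above, the asserted inequality over $K$ for a rational triple becomes, after extracting $[K:\Q]$-th roots, exactly the classical inequality $H_\Q(a,b,c)\ll_{\eps}\rad(abc)^{1+\eps}$. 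Thus any proof strategy must in particular resolve the rational case, so I would not expect to prove the statement outright; instead I would aim either to \emph{reduce} it to the rational conjecture via the functoriality that Masser's formulation enjoys, or to \emph{derive} it from a stronger and more structural height inequality.

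The natural model for a direct proof is the function-field analogue, the Mason--Stothers theorem, which \emph{is} a theorem: for $a+b=c$ in $\overline{k}(t)$ one bounds $\max(\deg a,\deg b,\deg c)$ by the number of distinct zeros and poles of $abc$, minus a term governed by Riemann--Hurwitz. The proof manufactures, from the relation $a+b=c$, a Wronskian satisfying $W(a,b)=W(a,c)$ and plays its zeros against the radical. The plan would be to arithmetize this: replace the derivation $d/dt$ by an arithmetic derivation (for instance via Buium's arithmetic jet spaces, or an Arakelov-theoretic Wronskian), let the discriminant factor $\Delta_K$ play the role of the genus term absorbing ramification, and reproduce the resulting inequality with $\supp_K(a,b,c)$ in place of the radical. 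Carrying this out is precisely where the program breaks down.

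An alternative is to deduce the conjecture from a deeper input. Via Belyi maps the $abc$-conjecture is essentially equivalent to effective height bounds for rational points on curves, and it is a known consequence of Vojta's conjecture on height inequalities for $\mathbb{P}^1$ minus three points; one could therefore try to establish it by first proving Vojta's inequality in this special case. In every route the main obstacle is the same and is genuinely fundamental: there is no known arithmetic substitute for differentiation strong enough to force the Wronskian-type bound, and the Vojta and effective-Mordell inputs are themselves open. Concretely, one must bound the height $H_K(a,b,c)$ by the ramification data packaged in $\Delta_K\,\supp_K(a,b,c)$ with an almost-optimal exponent $1+\eps$, and controlling that ramification unconditionally is exactly the content that remains out of reach. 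I would therefore present the statement as a hypothesis, as the paper does, rather than attempt a proof; the most one can honestly supply is the reduction to the classical conjecture in the first paragraph, which at least pins down precisely how strong the assumption is.
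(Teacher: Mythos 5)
You have correctly recognized that this statement is Masser's $abc$-conjecture for number fields, which the paper states as an open hypothesis (Conjecture \ref{conj: abc-conjecture}) and never proves; your decision to treat it as an assumption rather than attempt a proof matches the paper exactly. Your reduction showing it implies the classical $abc$-conjecture via the descent identities $H_K = H_\Q^{[K:\Q]}$ and $\supp_K = \supp_\Q^{[K:\Q]}$ is a correct and worthwhile observation consistent with the functoriality the paper cites from Masser.
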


We note that both Elkies' and Frey's version of the $abc$-conjecture use what we would call the \textit{divisor support} which is defined by a similar product as the ramified support \cite{Elkies1991, Murty2012}. For our results, Elkies' version  of the $abc$-conjecture gives a slightly simplified proof as the divisor support can be bounded in a straightforward way. 
\newline

We will require the following lower bound for the height function. 
\begin{lem}\label{lem: lower bound for height}
    Suppose $a,b, c \in \O_K^\times$, that $a,b,$ and $c$ are pairwise coprime, and  $a+b=c$. Then 
    \[
     \max (N(a), N(b), N(c)) \leq H_K(a,b,c).
    \]
\end{lem}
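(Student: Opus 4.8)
The plan is to unpack the definition of the height $H_K(a,b,c)$ and show that each of the three norms appears as a lower bound by selecting, for each individual element, only the places where that element is ``large'' in absolute value. Recall that
\[
H_K(a,b,c) = \prod_{v\in M_K}\max(\|a\|_v, \|b\|_v, \|c\|_v),
\]
and by the product formula $\prod_{v}\|x\|_v = 1$ for any $x\in K^\times$. The key observation is that the norm $N(a)$ can be recovered from the Archimedean places: since $a\in\O_K^\times$ is an algebraic integer, $\ord_{\p}(a)\geq 0$ at every finite place, so $\|a\|_v\leq 1$ for all $v\in M_K^0$, and hence by the product formula $N(a)=\prod_{v\in M_K^\infty}\|a\|_v$.

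First I would fix one element, say $a$, and compare $\prod_{v\in M_K^\infty}\max(\|a\|_v,\|b\|_v,\|c\|_v)$ against $N(a)$. Because the maximum at each place dominates $\|a\|_v$, we have
\[
\prod_{v\in M_K^\infty}\max(\|a\|_v,\|b\|_v,\|c\|_v)\;\geq\;\prod_{v\in M_K^\infty}\|a\|_v\;=\;N(a).
\]
Then I would extend the product over the infinite places to a product over all places to obtain $H_K$. For this step I need that the finite-place contribution $\prod_{v\in M_K^0}\max(\|a\|_v,\|b\|_v,\|c\|_v)\geq 1$. This is where I would use that $a,b,c$ are all units (or at least algebraic integers): at every finite place each of $\|a\|_v,\|b\|_v,\|c\|_v$ is at most $1$, but more importantly the maximum is a nonnegative power of $N(\p_v)$, so each factor is $\geq 1$. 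Combining, $H_K(a,b,c)\geq \prod_{v\in M_K^\infty}\max(\cdots)\geq N(a)$, and by symmetry the same holds with $b$ or $c$ in place of $a$, giving $\max(N(a),N(b),N(c))\leq H_K(a,b,c)$.

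The main subtlety to get right is the direction of the inequality at the finite places and exactly which hypotheses are used. Since $a,b,c\in\O_K^\times$, one in fact has $\ord_\p(a)=0$ at every finite place, so $\|a\|_v=1$ for all $v\in M_K^0$; then the maximum over the three elements at a finite place is $N(\p_v)^{-\min_x \ord_{\p_v}(x)}$, and I should verify this minimum is $\leq 0$ so that the factor is $\geq 1$. For units this is immediate because every valuation vanishes, so each finite factor is exactly $1$; the coprimality and the relation $a+b=c$ are actually not needed for this particular lemma and I would note that the statement holds for any units. The principal obstacle is therefore bookkeeping rather than depth: ensuring the normalizations of $\|\cdot\|_v$ are consistent with the product formula and that the restriction of the height product to infinite places really equals the norm, which follows cleanly once the units assumption collapses all finite contributions to $1$.
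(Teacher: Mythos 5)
Your proposal has the right skeleton --- the same one as the paper's proof: discard the finite places from the height and identify $\prod_{v\in M_K^\infty}||a||_v$ with $N(a)$ --- but your treatment of the finite places proves only a degenerate case of the lemma, and your claim that coprimality is not needed is precisely where the argument breaks. In this paper $\O_K^\times$ must be read as the nonzero elements of $\O_K$, not the unit group: the lemma is applied in the proof of Theorem \ref{thm: thm 1} to the triple $(1,\a^n-1,\a^n)$, whose entries are in general not units, and for genuine units both the coprimality hypothesis and the conclusion are vacuous (all three norms equal $1$). For nonzero algebraic integers your key assertion at the finite places --- that $\max(||a||_v,||b||_v,||c||_v)$ is a nonnegative power of $N(\p_v)$ --- is false: each $||x||_v=N(\p_v)^{-\ord_{\p_v}(x)}$ is a \emph{nonpositive} power, so all three can be $<1$ simultaneously and the finite factors can drag the height below the norms. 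Concretely, in $K=\Q$ with $a=b=2$, $c=4$ (not pairwise coprime), one has $H_\Q(2,2,4)=\tfrac{1}{2}\cdot 4=2<4=\max\left(N(a),N(b),N(c)\right)$, so the inequality genuinely fails once coprimality is dropped.

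The hypothesis you explicitly discard is exactly what the paper's proof uses at this step: since $a,b,c$ are pairwise coprime, no prime ideal divides two of them, so at each finite place $v$ at least one (in fact at least two) of $||a||_v,||b||_v,||c||_v$ equals $1$; hence every finite factor of the height is $\geq 1$ (indeed $=1$), and therefore $H_K(a,b,c)\geq \prod_{v\in M_K^\infty}\max(||a||_v,||b||_v,||c||_v)\geq \prod_{v\in M_K^\infty}||a||_v=N(a)$, symmetrically in $a$, $b$, and $c$. (You are right on one point: the relation $a+b=c$ plays no role in this lemma.) The repair to your write-up is small --- replace ``units, so all finite valuations vanish'' by ``pairwise coprime integers, so at each finite place at most one valuation is positive'' --- but as written your proof establishes only the trivial unit case and not the statement the paper actually needs.
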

\begin{proof}
    Since $a,b,c$ are pairwise coprime, for each finite place $v$, at least one of $||a||_v, ||b||_v, ||c||_v$ is equal to 1. Therefore, 
    \[
    H_K(a,b,c) \geq \prod_{v \in M_K^\infty}  \max(||a||_v, ||b||_v, ||c||_v).
    \]
    By the product formula, the absolute norm of a non-zero element $a\in \O_K$ is given by
    \[
    N(a) = \prod_{v \in M_K^\infty} ||a||_v.
    \]
    Therefore, 
    \[
    H_K(a,b,c) \geq N(a)\cdot \prod_{v \in M_K^\infty}  \max(1, ||b/a||_v, ||c/a||_v) \geq N(a). 
    \]
    Interchanging the role of $a,b$ and $c$, we deduce the result. 
\end{proof}
Just as we defined the height of a tuple $(a,b,c)\in (K^\times)^3$, we can define the the \textit{absolute multiplicative height} of $\a\in K^\times$ by
\[
h(\a) = \prod_{v\in M_K} \max(1, ||\a||_v)^{1/[K:\Q]}. 
\]
This height function and its logarithm have many nice properties and interesting applications. We refer the interested reader to the books \cite{WaldschmidtBook} and \cite{EvertseBook} for more on heights and their uses. In particular, we have the useful upper bound (which should be compared with the hypothesis of Lemma 4 of \cite{Graves2025}). 
\begin{lem}\label{lem: upperbound for norm}
    Suppose $\a\in K^\times$ and that $\a$ is not a root of unity. Then 
    \[
    N(\a^n-1)  \leq 2h(\a)^n.
    \]
\end{lem}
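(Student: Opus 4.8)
The plan is to reduce $N(\a^n-1)$ to a product over the archimedean places via the product formula, and then to estimate each local factor by the triangle inequality.

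First I would record the identity $N(\beta) = \prod_{v\in M_K^\infty}||\beta||_v$, valid for every $\beta\in K^\times$. This follows exactly as in the proof of Lemma \ref{lem: lower bound for height}: the finite places contribute $\prod_{v\in M_K^0}||\beta||_v = \prod_\p N(\p)^{-\ord_\p(\beta)} = N(\beta)^{-1}$, so the product formula forces the archimedean places to account for $N(\beta)$. Applying this with $\beta = \a^n-1$ gives $N(\a^n-1) = \prod_{v\in M_K^\infty}||\a^n-1||_v$, which isolates precisely the places on which a triangle-inequality estimate is available.

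Next, for each archimedean place $v$ I would bound the local factor. Writing $||\cdot||_v = |\cdot|_v^{d_v}$, the triangle inequality gives $|\a^n-1|_v \le |\a|_v^n + 1 \le 2\max(1,|\a|_v)^n$, and hence $||\a^n-1||_v \le 2^{d_v}\max(1,||\a||_v)^n$ after raising to the power $d_v$ and using $\max(1,x)^{d_v} = \max(1,x^{d_v})$. Taking the product over $v\in M_K^\infty$ and recalling $\sum_{v\in M_K^\infty} d_v = [K:\Q]$, the powers of $2$ assemble while the remaining archimedean product is controlled by the full multiplicative height; comparing with the definition $h(\a) = \prod_{v\in M_K}\max(1,||\a||_v)^{1/[K:\Q]}$ then yields a bound of the claimed shape $2\,h(\a)^n$.

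The main point to be careful about is the bookkeeping of the normalizations: one must track how discarding the non-archimedean factors (each of which contributes to the denominator of $\beta$ rather than to $N(\beta)$) interacts with the constant produced by the triangle inequality and with the $1/[K:\Q]$ appearing in the definition of $h$. The hypothesis that $\a$ is not a root of unity is not needed for the upper bound itself; it serves only to guarantee $h(\a)>1$, so that the right-hand side genuinely grows with $n$, which is the feature the later applications exploit.
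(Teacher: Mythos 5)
Your reduction to the archimedean places and the local triangle-inequality estimate are both correct (and the identity $N(\beta)=\prod_{v\in M_K^\infty}\|\beta\|_v$ does hold for all $\beta\in K^\times$, not only integers), and this is the natural direct argument — the paper itself offers no computation, just a one-line citation to inequality (1.9.2) and Lemma 1.9.2 of \cite{EvertseBook}. The gap is in your final assembly step. Tracking the exponents honestly: $\prod_{v\in M_K^\infty}2^{d_v}=2^{[K:\Q]}$, and the archimedean product is controlled by the height only through
\[
\prod_{v\in M_K^\infty}\max\left(1,\|\a\|_v\right)^n \;\leq\; \left(\prod_{v\in M_K}\max\left(1,\|\a\|_v\right)\right)^{n} \;=\; h(\a)^{n[K:\Q]},
\]
so what your computation actually proves is
\[
N(\a^n-1)\;\leq\;\left(2\,h(\a)^n\right)^{[K:\Q]},
\]
the $[K:\Q]$-th \emph{power} of the claimed bound, not the bound itself. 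The phrase ``the powers of $2$ assemble \dots\ yields a bound of the claimed shape'' is precisely where the degree factors get dropped, and no additional care in bookkeeping can recover them, because the inequality as printed in the lemma is false whenever $[K:\Q]>1$. Take $K=\Q(\sqrt{2})$ and $\a=1+\sqrt{2}$ (an admissible base, indeed a unit): then $h(\a)=(1+\sqrt{2})^{1/2}$, and for $n=4$ we have $\a^4-1=16+12\sqrt{2}$, so $N(\a^4-1)=\left|16^2-2\cdot 12^2\right|=32$, while $2h(\a)^4=2\left(3+2\sqrt{2}\right)\approx 11.66$. (Even simpler: $\a=3$, $n=2$ gives $N(\a^2-1)=64>18=2h(3)^2$.)

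So the defect lies in the statement being proved rather than in your strategy: the lemma as stated in the paper is missing the degree exponents, which the normalizations in \cite{EvertseBook} absorb. Your argument, with the corrected bookkeeping, establishes $N(\a^n-1)\leq 2^{[K:\Q]}h(\a)^{n[K:\Q]}$, and this is all the paper needs. In the only place the lemma is used — the proof of Theorem \ref{thm: thm 2} — it changes the admissible range of $n$ from $\log(x/2)/\log h(\a)$ to roughly $\log x/\left(2[K:\Q]\log h(\a)\right)$, which affects only the implied constant in the lower bound $\gg \log x/\log\log x$. Your closing remark is also correct: the non-root-of-unity hypothesis plays no role in the upper bound itself and is needed only (via Kronecker's theorem) to guarantee $h(\a)>1$, so that the bound genuinely constrains $n$ in the later application.
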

\begin{proof}
    This follows immediately from inequality (1.9.2) and Lemma 1.9.2 of \cite{EvertseBook} upon taking $S=~M_K^\infty$. 
\end{proof}

\section{An application of Siegel's Theorem}\label{sec: an application of Siegel's theorem}
For a number field $K$ with ring of integers $\O_K$, we will call a set $S$ of elements in $\O_K$  \textit{multiplicatively closed} if: $1$ is in $S$ and if $x,y\in S$, then $xy\in S$.  We will make the further assumption that $0$ is not in $S$. Then we define the \textit{localization} $S^{-1}\O_K$ to be the set
\[
S^{-1}\O_K := \left\{\frac{a}{s}: a\in \O_K, s\in S\right\} \subset K
\]
under the equivalence $\frac{a}{s_1}\sim\frac{b}{s_2}$ if and only if
\[
as_2 - bs_1 = 0.
\]
We will write $\O_{K,S} := S^{-1}\O_K$. The ring $\O_K$ has a canonical inclusion in $\O_{K,S}$ and we call $\O_{K,S}$ the ring of $S$-integers.  We can alternatively define the ring of $S$-integers using a set of places. Indeed, if $S\subset M_K$ is a finite set of places of $K$, then 
\[
\O_{K,S} = \{a\in K : ||a||_v \leq 1 \text{ for all } v\notin S \}. 
\]

\begin{prop}[\protect{\cite[Proposition 16]{LangANT}}]\label{prop: O_{K,S} is a dedekind domain}
    Let $S$ be a multiplicative set of $\O_K$, then $\O_{K,S}$ is a Dedekind domain. The map
    \[
    \mathfrak{a} \mapsto S^{-1}\mathfrak{a}
    \]
    is a surjective group homomorphism from the group of fractional ideals of $\O_K$ onto the group of fractional ideals of $\O_{K,S}$. Moreover, the kernel of this map is precisely the fractional ideals $\mathfrak{a} \subset \O_K$ such that $\mathfrak{a} \cap S\neq 0$.
\end{prop}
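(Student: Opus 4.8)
The plan is to establish the stated Proposition by verifying the three assertions in turn, leaning on the standard structure theory of localizations of Dedekind domains. First I would recall that $\O_K$ is a Dedekind domain, and that localization preserves the defining properties of Dedekind domains: $\O_{K,S}=S^{-1}\O_K$ is a Noetherian, integrally closed domain of Krull dimension at most one. Noetherianity is inherited because every ideal of $S^{-1}\O_K$ is the extension of an ideal of $\O_K$ (by the correspondence between ideals and their contractions under localization), and $\O_K$ is Noetherian. Integral closedness follows since localization commutes with integral closure in the fraction field, and $K$ is the fraction field of both rings. The dimension bound follows from the fact that primes of $S^{-1}\O_K$ correspond bijectively to primes of $\O_K$ disjoint from $S$, preserving inclusions, so no chain of length greater than one can appear. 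Hence $\O_{K,S}$ is a Dedekind domain.

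\medskip

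For the map $\mathfrak{a}\mapsto S^{-1}\mathfrak{a}$ on fractional ideals, I would first check it is a well-defined group homomorphism: extension of ideals under localization satisfies $S^{-1}(\mathfrak{a}\mathfrak{b})=(S^{-1}\mathfrak{a})(S^{-1}\mathfrak{b})$, and it sends $\O_K$ to $\O_{K,S}$, so it respects the multiplicative group structures on the fractional-ideal groups. Surjectivity I would obtain from the fact that every fractional ideal of the Dedekind domain $\O_{K,S}$ is generated by its intersection with $\O_K$; more concretely, given a fractional ideal $\mathfrak{b}$ of $\O_{K,S}$, the contraction $\mathfrak{b}\cap\O_K$ is a fractional ideal of $\O_K$ whose extension recovers $\mathfrak{b}$, since extension followed by contraction is the identity on extended ideals in a localization. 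By unique factorization of ideals in a Dedekind domain it suffices to check surjectivity on prime ideals, where it reduces to the prime correspondence already noted.

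\medskip

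For the kernel, I would identify it with those fractional ideals $\mathfrak{a}$ satisfying $S^{-1}\mathfrak{a}=\O_{K,S}$. Factoring $\mathfrak{a}$ into prime powers, $S^{-1}\mathfrak{a}=\O_{K,S}$ forces every prime $\p$ appearing in the factorization to become the unit ideal after localization, which happens exactly when $\p\cap S\neq\emptyset$, i.e. $\p$ meets $S$. Conversely, if $\mathfrak{a}\cap S\neq\emptyset$ then $\mathfrak{a}$ contains a unit of $\O_{K,S}$ and so extends to the whole ring. I would phrase the condition as $\mathfrak{a}\cap S\neq 0$ to match the statement, noting that $0\notin S$ by assumption so meeting $S$ is equivalent to containing a nonzero element of $S$.

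\medskip

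The main obstacle I anticipate is purely expository rather than mathematical: the result is entirely standard commutative algebra (indeed it is cited to \cite{LangANT}), so the real work lies in presenting the prime correspondence and the extension–contraction identities cleanly enough that surjectivity and the kernel description fall out without circular appeals to unique factorization. Since the Proposition is quoted from the literature, I expect the paper to simply cite it rather than reprove it, in which case no proof body is needed at all.
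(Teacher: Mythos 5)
Your closing paragraph is exactly right: the paper does not prove this proposition at all --- it is quoted with the citation to Lang and then used as a black box in later sections --- so there is no proof of the paper's to compare against, and your sketch must be judged on its own.

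As such, your outline is the standard argument and is essentially sound: Noetherianity, integral closedness, and dimension at most one all pass to $S^{-1}\O_K$ as you describe; multiplicativity of extension gives the homomorphism; and surjectivity follows from extension--contraction together with the prime correspondence. The one step stated too loosely is the converse inclusion in your kernel paragraph. You say that if $\mathfrak{a}\cap S\neq\emptyset$ then $\mathfrak{a}$ contains a unit of $\O_{K,S}$ ``and so extends to the whole ring.'' That inference is valid only when $\mathfrak{a}$ is an integral ideal, for then $S^{-1}\mathfrak{a}$ is an ideal of $\O_{K,S}$ containing a unit. For a general fractional ideal it fails: take $K=\Q$ and $S=\{2^n : n\geq 0\}$; the fractional ideal $\tfrac{1}{3}\Z$ contains $2$, which is a unit of $S^{-1}\Z$, yet $S^{-1}\bigl(\tfrac{1}{3}\Z\bigr)=\tfrac{1}{3}\,S^{-1}\Z\neq S^{-1}\Z$, so $\tfrac{1}{3}\Z$ meets $S$ but is not in the kernel. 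Since your kernel paragraph lets $\mathfrak{a}$ range over all fractional ideals (as it must, the kernel being a subgroup of the fractional ideal group), this distinction is not cosmetic: a subgroup cannot consist only of integral ideals, so ``meets $S$'' cannot be the literal description of the full kernel. What your factorization argument actually proves, and what should be stated, is that the kernel is the subgroup generated by the primes $\p$ with $\p\cap S\neq\emptyset$; among \emph{integral} ideals (which is how the proposition's ``$\mathfrak{a}\subset\O_K$'' should be read) membership in the kernel is then equivalent to $\mathfrak{a}\cap S\neq\emptyset$, with the forward direction given by your unit argument and the reverse direction by choosing $s_i\in\p_i\cap S$ for each prime factor $\p_i^{e_i}$ of $\mathfrak{a}$ and observing that $\prod_i s_i^{e_i}\in\mathfrak{a}\cap S$.
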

The prime ideals of $\O_S$ are ideals of the form $S^{-1}\p$ where $\p$ is a prime ideal of $\O_K$ which do not intersect $S$. If $S^{-1}\p$ is a prime ideal in $\O_S$, then its preimage under the map from the proposition is the ideal $\p$. 

A celebrated theorem of Hecke (see for example page 149 of \cite{murty-esmonde})
that every ideal class contains infinitely many prime ideals.  Since the class number is finite, we may choose a prime ideal from each ideal class to comprise a finite set $S$.  This observation enables us to state the following useful theorem.

\begin{prop}[\protect{\cite[Proposition 17]{LangANT}}]\label{prop: O_{K,S} has class number 1}
    There is a finite multiplicative set $S$ of $\O_K$ such that $\O_{K,S}$ has class number 1. 
\end{prop}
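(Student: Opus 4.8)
The plan is to combine the finiteness of the class number with Hecke's theorem (already quoted in the excerpt) that every ideal class of $\O_K$ contains infinitely many prime ideals, and then to use the group-homomorphism structure established in Proposition \ref{prop: O_{K,S} is a dedekind domain}. First I would let $h$ denote the class number of $K$ and choose representatives $C_1, \dots, C_h$ for the $h$ distinct ideal classes of $\O_K$. By Hecke's theorem, each class $C_i$ contains at least one prime ideal; I would select one such prime ideal $\p_i \in C_i$ for each $i$, and then assemble these into the finite multiplicative set $S$ generated by (nonzero elements chosen from) these primes — concretely, one picks a nonzero element $s_i \in \p_i$ and takes $S$ to be the multiplicative closure of $\{s_1, \dots, s_h\}$ together with $1$. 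This $S$ is finite in the sense required (finitely many generators) and does not contain $0$.

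Next I would invoke Proposition \ref{prop: O_{K,S} is a dedekind domain}, which tells us that the localization map $\mathfrak{a} \mapsto S^{-1}\mathfrak{a}$ is a \emph{surjective} group homomorphism from the fractional ideals of $\O_K$ onto the fractional ideals of $\O_{K,S}$, with kernel exactly those ideals meeting $S$. The key observation is that this map descends to a surjection on ideal class groups: principal ideals map to principal ideals, so we get an induced surjective homomorphism $\mathrm{Cl}(\O_K) \twoheadrightarrow \mathrm{Cl}(\O_{K,S})$. To finish, I would show this induced map is the zero map, i.e., that every ideal class of $\O_{K,S}$ is trivial. Since the map on fractional ideals is surjective, every class in $\mathrm{Cl}(\O_{K,S})$ is the image of some $C_i$; but each $\p_i \in C_i$ satisfies $\p_i \cap S \neq \emptyset$ by construction (as $s_i \in \p_i \cap S$), so $\p_i$ lies in the kernel of the localization map, meaning $S^{-1}\p_i = \O_{K,S}$ is the unit ideal, hence principal. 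Therefore every class $C_i$ maps to the trivial class, and by surjectivity $\mathrm{Cl}(\O_{K,S})$ is trivial.

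The main obstacle — really the only subtle point — is ensuring that the chosen primes $\p_i$ genuinely become principal (indeed trivial) after localization, which requires that each $\p_i$ actually meets $S$. This is why $S$ must be built from elements lying \emph{inside} the chosen primes rather than from arbitrary generators of the classes: one needs $s_i \in \p_i$ so that $\p_i$ is killed by the localization. A secondary point to state carefully is that the representatives $\p_i$ exhaust all ideal classes, so that by surjectivity of the induced class-group map their images exhaust $\mathrm{Cl}(\O_{K,S})$; this is exactly where the finiteness of the class number and Hecke's theorem are used in tandem. Once these two points are in place, the conclusion is immediate from the kernel description in Proposition \ref{prop: O_{K,S} is a dedekind domain}.
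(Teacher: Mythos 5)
Your proof is correct and takes essentially the same route as the paper, which only sketches the argument (Hecke's theorem supplies a prime ideal in each of the finitely many ideal classes) and refers to Lang's Proposition 17 for the details; your write-up, inverting elements $s_i\in\p_i$ and applying the kernel description of Proposition \ref{prop: O_{K,S} is a dedekind domain} to see that the surjection $\operatorname{Cl}(\O_K)\twoheadrightarrow\operatorname{Cl}(\O_{K,S})$ kills every class, is exactly the proof being cited. One small remark: once you pick elements $s_i$ inside the chosen representatives, their primality is never used, so Hecke's theorem is superfluous in your version---arbitrary integral ideals representing the classes would do---whereas the paper's formulation (inverting the chosen primes themselves, viewed as places) genuinely needs prime representatives.
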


We recall Siegel's theorem in the form of Corollary IX.3.2.1 from \cite{Silverman2009}. 

\begin{thm}[Siegel (1929)]
    Let $E/K$ be an elliptic curve with Weierstrass coordinates $x$ and $y$, $S\subset M_K$ a finite set of places including all of the infinite places of $K$, and $\O_{K,S}$ the ring of $S$-integers of $K$. Then 
    \[
    \{P\in E(K) : x(P)\in \O_{K,S}\}
    \]
    is a finite set. That is, $E$ has only finitely many points with $x$-coordinates in $\O_{K,S}$. 
\end{thm}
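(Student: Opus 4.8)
The plan is to follow the classical route to Siegel's theorem, combining the Mordell--Weil theorem, the theory of heights on $E$, and Roth's theorem on Diophantine approximation. Since the Weierstrass function $x$ has a single (double) pole at the origin $O$ of $E$, the condition $x(P)\in\O_{K,S}$ says precisely that $P$ is $v$-adically \emph{close} to $O$ at each $v\in S$ while staying $v$-adically bounded away from $O$ at every $v\notin S$. First I would introduce a $v$-adic distance function $d_v(P,Q)$ on $E(K_v)$ measuring the proximity of two points, write $\hat h$ for the canonical (N\'eron--Tate) height, and record the standard comparison $h(x(P))=2\hat h(P)+O(1)$ coming from the fact that $x\colon E\to\mathbb P^1$ has degree $2$. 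Arguing by contradiction, I assume there are infinitely many $S$-integral points; since any height bound contains only finitely many $K$-rational points, I can extract a sequence $P_i$ of distinct integral points with $\hat h(P_i)\to\infty$.

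The first key estimate is that for an $S$-integral point the entire height is concentrated at the places of $S$. Indeed, $h(x(P))=\sum_{v}\log^+\|x(P)\|_v$, and for $v\notin S$ one has $\|x(P)\|_v\le 1$, so the sum collapses to $v\in S$. Converting large values of $\|x(P)\|_v$ into $v$-adic nearness to the pole $O$, I obtain a lower bound of the shape
\[
\sum_{v\in S}\bigl(-\log d_v(P_i,O)\bigr)\;\gg\; \hat h(P_i),
\]
so that along the sequence the integral points approximate $O$ at the places of $S$ to an extent proportional to their height.

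The second, deeper step bounds this same quantity from above, and this is where the descent and Roth's theorem enter. By Mordell--Weil, $E(K)$ is finitely generated, so $E(K)/mE(K)$ is finite for each $m\ge 2$; passing to a subsequence I may write $P_i=mQ_i+R$ with $m$ and $R\in E(K)$ fixed and $Q_i\in E(K)$, and the quadratic law for the canonical height gives $\hat h(Q_i)\asymp m^{-2}\hat h(P_i)$. The multiplication-by-$m$ map lets me compare $d_v(P_i,O)$ with the distance from $Q_i$ to one of the finitely many points of the fibre $[m]^{-1}(-R)\subset E(\overline K)$, which are algebraic and defined over a bounded extension. Applying Roth's theorem to the approximation of the fixed algebraic number $x(Q)$ by the coordinates $x(Q_i)$, and letting $m$ grow so that the resulting factor $m^{-2}$ is as small as desired, yields Silverman's key proposition that for each fixed $v\in S$,
\[
\lim_{i\to\infty}\frac{-\log d_v(P_i,O)}{\hat h(P_i)}=0.
\]
Summing this over the finite set $S$ and comparing with the lower bound of the previous paragraph produces $c\,\hat h(P_i)\le o(\hat h(P_i))$ for some $c>0$, which is impossible for large $i$. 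Hence the set of integral points is finite.

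I expect the genuine obstacle to lie in the second step, namely extracting the $o(\hat h(P_i))$ bound from Roth's theorem: this is the non-effective and deepest ingredient, and making the descent interact cleanly with the local distance functions---accounting uniformly in $i$ for the division points $Q_i$, the finitely many branches over the pole $O$, and the bounded field extension over which $[m]^{-1}(-R)$ is defined---requires the careful height and local-distance bookkeeping of Silverman's Chapter~IX. The remaining pieces, namely Mordell--Weil, the quadratic transformation law for $\hat h$, and the concentration of height at $S$ for integral points, are comparatively routine once the distance formalism is in place.
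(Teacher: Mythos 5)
The paper does not actually prove this statement---it simply quotes Siegel's theorem in the form of Corollary IX.3.2.1 of \cite{Silverman2009}---and your sketch is a faithful outline of exactly the proof given in that cited source (Silverman's Chapter IX): Mordell--Weil descent $P_i = mQ_i + R$, the quadratic behaviour of the canonical height, $v$-adic distance functions, concentration of the height of an $S$-integral point at the places of $S$, and Roth's theorem supplying the non-effective bound $-\log d_v(P_i,O) = o\bigl(\hat h(P_i)\bigr)$. So your proposal is correct and takes essentially the same approach as the proof the paper relies on; there is nothing to add beyond the careful bookkeeping you already flag.
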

One can find more about Siegel's theorem, its applications, and generalizations in \cite{Hindry2000}.

\begin{prop} \label{prop: norm to infty}
    Let $K/\Q$ be a number field and $\a\in \O_K$ be an admissible Wieferich base. For any $M>0$, there are only finitely many positive integers $n$ such that 
    \[
    N(\a^n-1) \leq M.  
    \]
\end{prop}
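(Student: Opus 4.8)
The plan is to argue by contradiction: suppose that $N(\a^n-1)\leq M$ for infinitely many positive integers $n$ and deduce that $\a$ must be a root of unity. The engine is the finiteness of solutions of the unit equation over a suitable ring of $S$-integers, which is the standard consequence of Siegel's theorem recalled above. First I would fix a finite set of places $S$ once and for all. Since there are only finitely many prime ideals of $\O_K$ of norm at most $M$, I let $S$ consist of all infinite places of $K$, all prime ideals $\p$ with $N(\p)\leq M$, and all prime ideals dividing $(\a)$, enlarging $S$ further if convenient so that $\O_{K,S}$ is a principal ideal domain by Proposition \ref{prop: O_{K,S} has class number 1}. This set is finite and independent of $n$. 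The key observation is that the bound $N(\a^n-1)\leq M$ forces every prime divisor of the ideal $(\a^n-1)$ to have norm at most $M$: writing $N(\a^n-1)=\prod_\p N(\p)^{\ord_\p(\a^n-1)}$, each factor with $\ord_\p(\a^n-1)\geq 1$ is at least $N(\p)$, so $N(\p)\leq M$. Hence every such prime lies in $S$, and $\a^n-1\in\O_{K,S}^\times$. Because the support of $(\a)$ is contained in $S$ by construction, we also have $\a\in\O_{K,S}^\times$, and therefore $\a^n\in\O_{K,S}^\times$. (Note $\a^n-1\neq 0$, since otherwise $\a$ would be a root of unity.)

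Now set $u=\a^n$ and $v=\a^n-1$, so that $u-v=1$ with both $u$ and $v$ in $\O_{K,S}^\times$; that is, each of the infinitely many admissible $n$ produces a solution of the $S$-unit equation $u-v=1$. By the finiteness of solutions of the $S$-unit equation, the value $\a^n$ ranges over a finite set as $n$ varies. Finally, since $\a\neq 0$ and $\a$ is not a root of unity, the map $n\mapsto \a^n$ is injective: if $\a^n=\a^m$ with $n>m$ then $\a^{n-m}=1$, a contradiction. Thus only finitely many $n$ can occur, contradicting the assumption and proving the proposition.

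The main obstacle is entirely the input from Diophantine geometry: deducing the finiteness of solutions of the two-variable $S$-unit equation from Siegel's theorem in the elliptic-curve form stated above. The standard route uses that $\O_{K,S}^\times/(\O_{K,S}^\times)^3$ is finite (Dirichlet's $S$-unit theorem) to write $u=a_iX^3$ and $v=b_jY^3$ for finitely many fixed coset representatives $a_i,b_j$ and $S$-units $X,Y$, after passing to a finite extension containing the cube roots of unity and the relevant cube roots. The relation $u-v=1$ then becomes $a_iX^3-b_jY^3=1$, a smooth plane cubic, hence an elliptic curve, on which $(X,Y)$ is an integral point for the enlarged set of places; Siegel's theorem bounds these points and therefore the number of pairs $(u,v)$. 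Once this reduction is in place, the rest of the argument is bookkeeping, and it is worth emphasizing that this finiteness is precisely the fact overlooked in the references discussed after Theorem \ref{thm: thm 1}.
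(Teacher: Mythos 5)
Your proof is correct, and it takes a recognizably different route from the paper's, even though both ultimately rest on Siegel's theorem applied over a ring of $S$-integers. You enlarge $S$ so as to absorb every prime of norm at most $M$ together with the primes dividing $(\a)$; this makes both $\a^n$ and $\a^n-1$ into $S$-units, reducing the whole problem to the finiteness of solutions of the two-variable $S$-unit equation $u-v=1$, which you then deduce from Siegel via the cube-coset trick ($u=a_iX^3$, $v=b_jY^3$, giving the plane cubics $a_iX^3-b_jY^3=1$). The paper instead keeps $S$ minimal (just large enough that $\O_{K,S}$ is a PID), writes $\a^n-1=\eta\,\pi_1^{c_1}\cdots\pi_r^{c_r}$ with the small-norm primes kept as prime elements and bounded exponents, and uses an asymmetric $2$--$3$ decomposition: $n=2z_n+\theta_n$ on the left and exponents reduced mod $3$ on the right, producing finitely many curves $Ay^2=BX^3+1$ over $K$ with the point $(X,\a^{z_n})$. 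What your route buys is conceptual cleanliness: the $S$-unit equation is a standard, citable intermediate statement (the paper itself remarks, in its discussion of Ichimura's work, that this finiteness is a straightforward consequence of Siegel's theorem), and your observation that the norm bound forces $\a^n-1$ to be an $S$-unit is slicker than tracking exponents $c_i$. What the paper's route buys is that its curves are already essentially in Weierstrass form over $K$ (multiplying $Ay^2=BX^3+1$ by $A^3B^2$ gives $(A^2By)^2=(ABX)^3+A^3B^2$), so the cited form of Siegel's theorem applies with no base change; your cubics $a_iX^3-b_jY^3=1$ need not have a $K$-rational point, so you genuinely need the passage to a finite extension containing the relevant cube roots (plus the routine transfer of integrality under the birational map, after enlarging $S$) --- you flag this correctly, but it is the one step where your argument requires more care than the paper's to match the precise statement of Siegel's theorem being invoked.
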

\begin{proof}
    Towards a contradiction, suppose there exists some $M>0$ such that there are infinitely many $n$ for which
    \[
    N(\a^n-1) \leq M.
    \]
    
    From Proposition \ref{prop: O_{K,S} has class number 1}, there is a multiplicative set $S$ such that $\O_{K,S}$ is a PID. In particular, we can take $S$ to be the multiplicative set of all finite places associated to the prime ideal factors of each integral ideal representative of the different ideal classes of $\operatorname{Cl}(K)$ along with all of the infinite places of $K$. By Proposition \ref{prop: O_{K,S} is a dedekind domain},  $\O_{K,S}$ is a Dedekind domain.  Thus there are only finitely many prime ideals of bounded norm and hence finitely many prime elements with norm less than $M$. We enumerate these prime elements as $\pi_1, \ldots, \pi_r$.\newline 
    
    For each $n$ for which
    \[
    N(\a^n-1) \leq M, 
    \]
    we write 
    \[
    \a^n -1 = \eta \pi_1^{c_1}\cdots \pi_r^{c_r}
    \]
    where $\eta\in\O_{K,S}^{\times}$, $\pi_i\in \O_{K,S}$ are prime elements, and $c_i$ are non-negative integers. By Dirichlet's unit theorem for $S$-integers (Corollary 1.8.2 of \cite{Evertse1984}), we can write 
    \[
    \eta = \zeta \fun_1^{b_1}\cdots \fun_{s-1}^{b_{s-1}} 
    \]
    where $\zeta$ is a root of unity in $K$, $\{\fun_i\}_{i=1}^{s-1}$ are a fundamental system of units of $\O_{K,S}^\times$, and $b_i$ are non-negative integers. Then we write, using the division algorithm, 
    \[
    b_i = 3x_i + \beta_i \text{ and  } c_i = 3y_i +\gamma_i 
    \]
    where $\beta_i, \gamma_i \in \{0,1,2\}$. Similarly, for each value of $n$ we will write 
    \[
    n = 2z_n + \theta_n, 
    \]
    where $\theta_n\in \{0,1\}$. Since $N(\a^n-1)\leq M$, there are only finitely many choices for the $c_i$. Then we consider the family of curves given by 
    \[
    A y^2 = BX^3 +1
    \]
    where 
    \[
    A = \a^{\theta_n}
    \]
    and 
    \[
    B= \zeta \fun_1^{\beta_1}\cdots \fun_{s-1}^{\beta_{s-1}}\pi_1^{\gamma_1}\cdots \pi_r^{\gamma_r}.
    \]
    There are at most $\# W_K \cdot 2\cdot r^3\cdot (s-1)^3$ curves in this family, where $s$ is the unit rank of $\O_{K, S}$, and $W_K$ is the set of roots of unity in $K$. By Siegel's theorem, each of these curves has only finitely many $S$-integer points. However, by construction, each $n$ for which $N(\a^n-1)\leq M$ produces a distinct $\O_{K,S}$ point on one of the finitely many elliptic curves above.  This contradicts the finiteness from Siegel's theorem and hence there must be only finitely many $n$ for which $N(\a^n-1)\leq M$.     
\end{proof}
As an immediate consequence,  we obtain the following norm condition for admissible Wieferich bases. 
\begin{cor}\label{cor: norm to infty}
    If $\a\in \O_K$ is an admissible Wieferich base, then $N(\a^n-1)\to \infty$ as $n\to \infty$. 
\end{cor}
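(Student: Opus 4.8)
The plan is to prove Corollary \ref{cor: norm to infty} as a direct restatement of Proposition \ref{prop: norm to infty}. The two assertions are logically equivalent: saying $N(\a^n-1)\to\infty$ means precisely that for every $M>0$, the inequality $N(\a^n-1)\le M$ holds for only finitely many $n$, which is exactly the content of the proposition. So the only work is to unwind the definition of a sequence of positive integers tending to infinity and invoke the proposition.

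Concretely, I would argue by contradiction. Suppose $N(\a^n-1)$ does \emph{not} tend to infinity as $n\to\infty$. Then there exists a bound $M>0$ and an infinite sequence of indices $n_1<n_2<\cdots$ with $N(\a^{n_k}-1)\le M$ for all $k$. This produces infinitely many positive integers $n$ satisfying $N(\a^n-1)\le M$, contradicting Proposition \ref{prop: norm to infty}, which guarantees that for each fixed $M$ only finitely many such $n$ exist. Hence no such $M$ can exist, and $N(\a^n-1)\to\infty$.

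Since $\a$ is an admissible Wieferich base, the hypotheses of Proposition \ref{prop: norm to infty} are met verbatim, so the invocation is immediate and requires no additional verification. There is essentially no obstacle here: the mathematical substance — the application of Siegel's theorem to the family of curves $Ay^2=BX^3+1$ — has already been carried out in the proof of the proposition. The corollary is purely a reformulation, packaging the ``finitely many $n$ below any threshold'' statement into the more convenient limit form that will be used in the later height and $abc$ arguments.

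\begin{proof}
    Let $M>0$ be arbitrary. By Proposition \ref{prop: norm to infty}, there are only finitely many positive integers $n$ with $N(\a^n-1)\le M$. Consequently, there exists $N_0$ such that $N(\a^n-1)>M$ for all $n\ge N_0$. As $M$ was arbitrary, this is exactly the statement that $N(\a^n-1)\to\infty$ as $n\to\infty$.
\end{proof}
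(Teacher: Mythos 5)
Your proof is correct and matches the paper exactly: the paper states this corollary as an immediate consequence of Proposition \ref{prop: norm to infty}, which is precisely the unwinding of the limit definition you carry out. Nothing more is needed.
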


\section{The factorization of cyclotomic ideals}
Fix an algebraic number field $K/\Q$ and a non-zero non-root of unity $\a\in \O_K$. Let $\Phi_n(x)$ denote the $n$-th cyclotomic polynomial. Then it is well known that for any algebraic number $\a$, 
\begin{align}\label{eq: factorization}
    \a^n-1 = \prod_{d\mid n} \Phi_n(\a).
\end{align}
In this section we study the prime ideal factorization of $A_n(\a)=(\a^n-1)$. We will call an ideal a \textit{cyclotomic ideal} if it is a principal ideal generated by the value of a cyclotomic polynomial. We denote the $n$-th cyclotomic ideal generated by $\Phi_n(\a)$ by $C_{n}(\a)$. To ease notation we will simply use $A_n$ and $C_n$ when the base being used is clear. We begin with the following elementary lemma. 

\begin{lem}\label{lem: product of principal ideals}
    Suppose $R$ is a commutative ring with multiplicative identity $1$. If $a,b\in R$, then $(a)(b) = (ab)$. That is, the product of two principal ideals is the ideal generated by the product of the two generators. 
\end{lem}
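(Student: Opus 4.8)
The plan is to prove $(a)(b) = (ab)$ by establishing both inclusions. Recall that the product of two ideals $I$ and $J$ in a commutative ring is defined as the set of all finite sums $\sum_k x_k y_k$ with $x_k \in I$ and $y_k \in J$; the principal ideal $(a)$ is simply $\{ra : r \in R\} = aR$. So the goal is to show that the set of finite sums of products of multiples of $a$ and multiples of $b$ coincides with the set of multiples of $ab$.

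For the inclusion $(ab) \subseteq (a)(b)$, I would simply observe that $ab = a \cdot b$ is a product of an element of $(a)$ and an element of $(b)$, hence lies in $(a)(b)$. Since $(a)(b)$ is an ideal, it contains every $R$-multiple $r(ab)$, and therefore contains all of $(ab)$. For the reverse inclusion $(a)(b) \subseteq (ab)$, I would take an arbitrary generating element $x \cdot y$ with $x \in (a)$ and $y \in (b)$, write $x = ra$ and $y = sb$ for some $r,s \in R$, and compute $xy = (ra)(sb) = (rs)(ab)$ using commutativity and associativity. This shows each such product lies in $(ab)$; since $(ab)$ is closed under addition, every finite sum $\sum_k x_k y_k$ of such products also lies in $(ab)$, giving the inclusion.

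The two inclusions together yield the equality. There is essentially no obstacle here: the only point requiring care is the use of commutativity when rearranging $(ra)(sb) = (rs)(ab)$, which is exactly why the hypothesis that $R$ is commutative is invoked in the statement. The argument is short and purely formal, so I would present it in two or three sentences rather than belaboring the routine verifications.
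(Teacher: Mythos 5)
Your proof is correct and takes essentially the same route as the paper: both establish the double inclusion directly from the definition of the ideal product, using $1\in R$ to get $ab\in(a)(b)$ and factoring $ab$ out of each term of a finite sum (via commutativity) for the reverse inclusion. No gaps.
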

\begin{proof}
   Since $1\in R$, we have that $ab\in (a)(b)$ and hence $(ab) \subseteq (a)(b)$. On the other hand, 
    \[
    (a)(b) =\left\{ab\sum x_iy_i : x_i,y_i\in R\right\}
    \]
    from which we deduce that $(a)(b) \subseteq (ab)$ since we may take $x_i=y_i=1$. 
\end{proof}
Applying the lemma to the factorization in equation (\ref{eq: factorization}) we immediately obtain:
\begin{cor}\label{cor: cyclo. ideal factorization}
    If $\a\in \O_K$, then
    \[
    A_n = \prod_{d\mid n} C_d.
    \]
\end{cor}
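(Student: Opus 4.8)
The plan is to derive Corollary \ref{cor: cyclo. ideal factorization} by passing from the polynomial identity in equation (\ref{eq: factorization}) to the corresponding identity of principal ideals, using Lemma \ref{lem: product of principal ideals}. Since $\O_K$ is a commutative ring with multiplicative identity, that lemma applies without any further hypotheses.

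First I would take the factorization $\a^n - 1 = \prod_{d\mid n}\Phi_d(\a)$ and generate the principal ideal of each side. By definition the principal ideal generated by the left-hand side is $A_n = (\a^n-1)$, while the right-hand side becomes the principal ideal generated by the single element $\prod_{d\mid n}\Phi_d(\a)$.

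The key step is then to show that the principal ideal generated by a finite product equals the product of the principal ideals generated by the individual factors. Lemma \ref{lem: product of principal ideals} supplies exactly this for two factors, and I would extend it to an arbitrary finite product by a routine induction on the number of divisors of $n$. Enumerating the divisors as $d_1,\dots,d_k$ and applying the lemma repeatedly gives
\[
\left(\prod_{i=1}^{k}\Phi_{d_i}(\a)\right) = \prod_{i=1}^{k}\bigl(\Phi_{d_i}(\a)\bigr) = \prod_{d\mid n} C_d,
\]
where the final equality is just the definition $C_d = (\Phi_d(\a))$. Combining this with the previous step yields $A_n = \prod_{d\mid n} C_d$, as desired.

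I do not expect any genuine obstacle in this argument: all of the substance is already contained in the cyclotomic factorization and in Lemma \ref{lem: product of principal ideals}, and the only bookkeeping required is the short induction that promotes the two-factor identity of the lemma to the full product over the divisors of $n$.
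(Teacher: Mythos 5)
Your proposal is correct and follows exactly the paper's route: the paper also obtains the corollary by applying Lemma \ref{lem: product of principal ideals} to the factorization in equation (\ref{eq: factorization}), merely leaving implicit the routine induction over the divisors of $n$ that you spell out. No gaps; the approaches are the same.
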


As in Section \ref{sec: the abc-conjecture}, for a non-zero prime ideal $\p\subset \O_K$ and any ideal $I$ of $\O_K$, we define $\ord_\p(I)$ to be the largest power of $\p$ occurring in the prime ideal factorization of $I$. If $I$ is the zero ideal, we set $\ord_\p(I)=\infty$. Similarly, for any rational prime $p$ and integer $n$, we define $\ord_p(n)$ to be the highest power of $p$ dividing the integer $n$. We write $\p|p$ to denote that the prime $\p$ lies above the rational prime $p$ and $e(\p|p) = \ord_\p(p)$ for the ramification index of $\p|p$.
\newline

Suppose $\p$ is a non-zero prime ideal and that $\a\notin \p$. The order of $\a \mod{\p}$  is the smallest positive integer $f_\a(\p)$ for which 
\[
\a^{f_\a(\p)} \equiv 1 \mod{\p}. 
\]
By Lagrange's theorem, $f_\a(\p)$ is bounded by $N(\p)-1$. Furthermore, let $\delta_\a(\p)$ denote the exact power of $\p$ dividing $A_{f_\a(\p)}$, i.e., $\delta_\a(\p)= \ord_\p(A_{f_\a(\p)})$. 

\begin{lem} \label{lem: exact order}
    Suppose $\a\not \in \p$. Then $\ord_\p(C_{f_\a(\p)}) = \delta_{\a}(\p)$.
\end{lem}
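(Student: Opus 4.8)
The plan is to reduce the claim to the multiplicativity of the factorization in Corollary \ref{cor: cyclo. ideal factorization} together with the defining property of the order $f_\a(\p)$. Write $f = f_\a(\p)$. Since $\ord_\p$ is additive on products of ideals, applying it to the factorization $A_f = \prod_{d \mid f} C_d$ yields $\ord_\p(A_f) = \sum_{d \mid f} \ord_\p(C_d)$. Because $\delta_\a(\p) = \ord_\p(A_f)$ by definition, it suffices to show that every term on the right corresponding to a \emph{proper} divisor $d$ of $f$ vanishes, so that the sum collapses to the single term $\ord_\p(C_f)$.

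First I would establish the key vanishing: for each divisor $d \mid f$ with $d < f$, I claim $\ord_\p(C_d) = 0$. Suppose instead $\p \mid C_d$. Applying Corollary \ref{cor: cyclo. ideal factorization} to $A_d$ shows that $C_d$ divides $A_d = (\a^d - 1)$, so $\p \mid (\a^d - 1)$ and hence $\a^d \equiv 1 \mod{\p}$. This forces $f \mid d$, contradicting $0 < d < f$. Therefore $\ord_\p(C_d) = 0$ for every proper divisor of $f$, and the sum reduces to $\ord_\p(A_f) = \ord_\p(C_f)$, which is exactly the assertion $\delta_\a(\p) = \ord_\p(C_f)$.

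The argument has no real obstacle; the only point requiring care is the clean logical equivalence ``$\a^d \equiv 1 \mod{\p}$ if and only if $f_\a(\p) \mid d$'', which is the standard fact that the order divides any exponent killing $\a$ modulo $\p$. I would also remark that since $\a$ is not a root of unity we have $\a^f - 1 \neq 0$, so $A_f$ is a nonzero ideal and $\delta_\a(\p)$ is finite and well-defined; and because $\a^f \equiv 1 \mod{\p}$ by the very definition of $f$, one in fact obtains $\ord_\p(C_f) = \delta_\a(\p) \geq 1$, confirming that the distinguished term genuinely carries all of the $\p$-adic mass of $A_f$.
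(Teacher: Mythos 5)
Your proof is correct and follows essentially the same route as the paper: both reduce the claim to the factorization $A_{f_\a(\p)} = \prod_{d \mid f_\a(\p)} C_d$ and then kill the terms at proper divisors $d$ by observing that $\p \mid C_d$ would give $\p \mid A_d$, i.e. $\a^d \equiv 1 \mod{\p}$, contradicting the minimality of $f_\a(\p)$. Your phrasing via additivity of $\ord_\p$ and ``order divides exponent'' is just a cosmetic repackaging of the paper's ``exactly divides'' argument.
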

\begin{proof}
    By definition, $\p^{\delta_{\a}(\p)}$ exactly divides $A_{f_{\a}(\p)}$. By Corollary \ref{cor: cyclo. ideal factorization}, $\p^{\delta_\a(\p)}$ exactly divides 
    \[
    \prod_{d\mid f_\a(\p)} C_d
    \]
    and hence $\p$ must divide $C_d$ for at least one divisor $d$ of $f_\a(\p)$. If $\p\mid C_d$ for any proper divisor $d$ of $f_\p$, then $\p$ divides
    \[
     C_d \prod_{\substack{e\mid d\\e\neq d}} C_e = A_d.
    \]
    Since $d<f_{\a}(\p)$, this contradicts the minimality of $f_\a(\p)$. Hence, $\ord_\p(C_{f_\a(\p)}) = \delta_\a(\p)$. 
\end{proof}

This determines the power of $\p$ that occurs in the factorization of $C_n$ when $n=f_\a(\p)$. We follow an argument of E. Artin  \cite{Artin1955} to classify the powers of $\p$ that can divide the cyclotomic ideals when $n\neq f_\a(\p)$. 
\begin{lem}\label{lem: p-adic valuation of ideal}
   Let $K/\Q$ be an algebraic number field, $\a\in \O_K$ be non-zero and not a root of unity, and $p$ an odd unramified rational prime.  If $\p|p$ is a prime ideal, then
    \[
    \ord_\p (C_n) = \begin{cases}
    \delta_\a(\p) & \text{ if $n=f_\a(\p)$,}\\
    1 & \text{  if $n=p^if_\a(\p) $ for any $i\geq 1$, and}\\
    0 & \text{  otherwise.}
    \end{cases}
    \]
\end{lem}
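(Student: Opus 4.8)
The plan is to compute $\ord_\p(C_n) = \ord_\p(\Phi_n(\a))$ by understanding when $\p$ divides $\a^n - 1$ and tracking the multiplicity via lifting-the-exponent arguments adapted to the ideal setting. The starting observation is that $\p \mid C_n$ forces $\p \mid A_n = (\a^n - 1)$, i.e. $\a^n \equiv 1 \mod{\p}$, which by definition of the order happens if and only if $f_\a(\p) \mid n$. So the three cases of the statement really partition the divisors of $n$: either $f := f_\a(\p)$ does not divide $n$ (giving $\ord_\p(C_n) = 0$ immediately, since then $\p \nmid A_n$ and a fortiori $\p \nmid C_n$), or $f \mid n$, in which case I must extract the exact power. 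The base case $n = f$ is already handled by Lemma \ref{lem: exact order}, which gives $\ord_\p(C_f) = \delta_\a(\p)$.

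For the remaining case $f \mid n$ with $n \neq f$, **I would** argue via a lifting-the-exponent (LTE) style computation for the valuation $\ord_\p(\a^m - 1)$ as $m$ ranges over multiples of $f$. Writing $n = f \cdot k$, one shows that $\ord_\p(\a^{fk} - 1) = \delta_\a(\p) + \ord_\p(k)$ whenever $p$ is odd and unramified (so that $e(\p \mid p) = 1$ and $\ord_\p$ behaves like a $p$-adic valuation on the relevant quantities). This is the number-field analogue of the classical fact that $\ord_p(a^{fk}-1) = \ord_p(a^f - 1) + \ord_p(k)$; the odd and unramified hypotheses are exactly what make the cross-terms in the binomial expansion of $(1 + (\a^f - 1))^k - 1$ have strictly larger valuation, so the linear term $k(\a^f - 1)$ dominates. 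Since $A_n = \prod_{d \mid n} C_d$ by Corollary \ref{cor: cyclo. ideal factorization}, I can recover $\ord_\p(C_n)$ from $\ord_\p(A_n) - \ord_\p(A_{n/\ell})$-type differences (telescoping over the divisor lattice), or more cleanly: for $d \mid n$ with $f \mid d$, the formula $\ord_\p(A_d) = \delta_\a(\p) + \ord_\p(d/f)$ lets me isolate each cyclotomic factor.

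The cleanest route to the exact statement is to combine the LTE formula with the multiplicativity $A_n = \prod_{d \mid n} C_d$. Fix $f = f_\a(\p)$ and suppose $f \mid n$. Then $\ord_\p(A_n) = \delta_\a(\p) + \ord_\p(n/f)$, and the only divisors $d$ of $n$ contributing to $\ord_\p(A_n)$ are multiples of $f$. For such $d = f p^i m$ with $p \nmid m$, the LTE formula gives $\ord_\p(A_d) = \delta_\a(\p) + i$, so the contribution of $C_d$ is forced by the difference between consecutive levels. Carrying out the telescoping shows $\ord_\p(C_f) = \delta_\a(\p)$, that $\ord_\p(C_{fp^i}) = \ord_\p(A_{fp^i}) - \ord_\p(A_{fp^{i-1}}) = (\delta_\a(\p) + i) - (\delta_\a(\p) + i - 1) = 1$ for each $i \geq 1$, and that $\ord_\p(C_d) = 0$ for any $d$ with $f \mid d$ but $d$ not of the form $f p^i$ (since passing from $d$ to $dp$-type refinements adds valuation only along the $p$-power direction).

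**The main obstacle** is establishing the LTE formula $\ord_\p(\a^{fk} - 1) = \delta_\a(\p) + \ord_\p(k)$ rigorously in the Dedekind-domain setting, where $\ord_\p$ is a discrete valuation but $\a$ and $\a^f - 1$ are genuine algebraic integers rather than rationals. The delicate point is the inductive step showing that raising the exponent by a factor of $p$ increases the $\p$-valuation by exactly one: writing $\a^f = 1 + \beta$ with $\ord_\p(\beta) = \delta_\a(\p) \geq 1$, I must expand $\a^{fp} - 1 = (1+\beta)^p - 1 = p\beta + \binom{p}{2}\beta^2 + \cdots + \beta^p$ and verify that $\ord_\p(p\beta) = 1 + \delta_\a(\p)$ strictly beats every other term. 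Here $\ord_\p(p) = e(\p \mid p) = 1$ uses unramifiedness, and the $\beta^p$ term has valuation $p\,\delta_\a(\p) \geq p \geq 3 > 1 + \delta_\a(\p)$ exactly when $p$ is odd and $\delta_\a(\p) \geq 1$ — this is precisely where both hypotheses (odd and unramified) are consumed. The binomial coefficients $\binom{p}{j}$ for $1 < j < p$ are divisible by $p$, so those terms have valuation at least $1 + j\,\delta_\a(\p) > 1 + \delta_\a(\p)$, and the isolated dominance of $p\beta$ follows; iterating gives the full formula.
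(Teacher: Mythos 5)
Your proposal is correct, and its computational engine is the same one the paper runs on: expand $(1+\beta)^p-1$ with $\beta=\a^{f_\a(\p)}-1$ and use the hypotheses that $p$ is odd and unramified to see that the term $p\beta$, of valuation $1+\delta_\a(\p)$, strictly dominates. Where you genuinely differ is the bookkeeping that passes from $A_n$ to $C_n$. The paper works with the ratios $(\a^n-1)/(\a^r-1)$ directly: for $n=p^if_\a(\p)m$ with $m>1$ coprime to $p$ it shows the ratio is $\equiv m\not\equiv 0 \mod{\p}$, for $n=p^if_\a(\p)$ it shows the ratio is $\equiv p \not\equiv 0\mod{\p^2}$, and it reads off $\ord_\p(C_n)$ from the ratio. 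You instead prove the lifting-the-exponent formula $\ord_\p(\a^{fk}-1)=\delta_\a(\p)+\ord_p(k)$ for all $k$ and then recover each $\ord_\p(C_n)$ by differencing along the divisor lattice, with non-negativity of valuations forcing $\ord_\p(C_d)=0$ for every multiple of $f_\a(\p)$ not of the form $f_\a(\p)p^i$. Your version is in one respect tighter: the paper's second case asserts $\Phi_n(\a)=(\a^n-1)/(\a^r-1)$, which is literally true only when $f_\a(\p)=1$; in general the ratio equals $\prod_{d\mid f_\a(\p)}\Phi_{p^id}(\a)$, and one must invoke the "otherwise" case to see that the extra factors are units at $\p$. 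Your telescoping identity $\ord_\p(C_{f p^i})=\ord_\p(A_{fp^i})-\ord_\p(A_{fp^{i-1}})$ handles this point automatically and correctly.

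Two blemishes to repair, neither fatal. First, your chain $p\,\delta_\a(\p)\geq p\geq 3>1+\delta_\a(\p)$ is false whenever $\delta_\a(\p)\geq 2$; the inequality you actually need is $(p-1)\,\delta_\a(\p)>1$, which is immediate from $p\geq 3$ and $\delta_\a(\p)\geq 1$, so state it that way (and note the iterated step needs $(p-1)(\delta_\a(\p)+i-1)>1$, which holds for the same reason). Amusingly, the paper's proof has the mirror-image slip at the same extreme term: its bound $\ord_\p\bigl(\binom{p}{k}(\a^r-1)^{k-1}\bigr)\geq e(\p|p)+(k-1)\delta_\a(\p)$ fails at $k=p$ since $\binom{p}{p}=1$, though the conclusion $\geq 2$ survives because $p-1\geq 2$. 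Second, your full LTE formula also needs the coprime step $\ord_\p(\a^{rm}-1)=\ord_\p(\a^r-1)$ for $p\nmid m$ (it is what kills the "otherwise" case with $m>1$); this is the easy half, since $(\a^{rm}-1)/(\a^r-1)=\sum_{k=0}^{m-1}\a^{rk}\equiv m\not\equiv 0\mod{\p}$, but it deserves the same explicit treatment you gave the $p$-power step.
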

\begin{proof}
The first case is handled by the previous lemma. We will dispense of the third case next. Fix a rational prime $p$. Suppose $n=p^if_{\a}(\p)m$ for some $i\geq 0$ and integer $m>1$ coprime to $p$. Set $r=p^if_{\a}(\p)$. Then, 
    \[
    \frac{\a^n-1}{\a^r-1} =  \sum_{k=0}^{m-1} \a^{r k}.
    \]
    Since $\a^{f_\a(\p)} \equiv 1\mod{\p}$ and $\gcd(p,m)=1$ we deduce that 
    \[
    \frac{\a^n-1}{\a^r-1} \equiv m \not \equiv 0 \mod{\p}.
    \]
    Since $\Phi_n(\a)$ divides $\frac{\a^n-1}{\a^r-1}$, we deduce that $\ord_\p(C_n)=0$.  \newline

   In the second case, we set $n=p^if_{\a}(\p)$ and $r=p^{i-1}f_{\a}(\p)$ for some $i\geq 1$. Then, 
    \[
        \Phi_n(\a) = \frac{\a^{n}-1}{\a^r-1} = \sum_{k=1}^{p}{p\choose k}(\a^r-1)^{k-1}.
    \]
    For $2\le k\leq p$, we have that
    \[
    \ord_\p\left( {p\choose k}(\a^r-1)^{k-1} \right) \geq e(\p|p) + (k-1)\delta_{\a}(\p) \geq 2
    \]
    since $e(\p|p)=1$ and $(k-1)\delta_\a(\p) \geq 1$. Therefore, 
     \[
        \Phi_n(\a) \equiv p\not\equiv 0 \mod{\p^2}
    \]
    as $p$ is unramified.

\end{proof}
We remark that the first and third parts of the lemma remain unchanged if we consider odd ramified primes or primes above $2$.  In the odd ramified case, it is evident from the above calculation that $\ord_\p(C_{p^if_\a(\p)} )\geq 2$. For $\p|2$, we have that $\ord_\p(C_{2^if_\a(\p)})>0$. \newline 

The previous lemma allows us to quickly determine the following:
\begin{lem}\label{lem: gcd of ideals}
    Suppose $\gcd(m,n)=1$. Then $\gcd( A_m, A_n) = (\a-1)\O_K$. 
\end{lem}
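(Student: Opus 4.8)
The plan is to prove the ideal identity by comparing $\p$-adic valuations at every prime ideal $\p \subset \O_K$, since two ideals of the Dedekind domain $\O_K$ coincide precisely when $\ord_\p$ agrees for all $\p$, and $\ord_\p(\gcd(A_m,A_n)) = \min(\ord_\p(A_m),\ord_\p(A_n))$. One inclusion is immediate: because $\Phi_1(\a) = \a-1$, the ideal $C_1 = (\a-1)$ occurs as a factor of both $A_m = \prod_{d\mid m} C_d$ and $A_n = \prod_{d\mid n} C_d$ by Corollary~\ref{cor: cyclo. ideal factorization} (as $1$ divides every positive integer). Hence $(\a-1) \mid \gcd(A_m, A_n)$, i.e. $\ord_\p((\a-1)) \leq \min(\ord_\p(A_m),\ord_\p(A_n))$ for every $\p$. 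The content of the lemma is the reverse inequality.

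To establish $\min(\ord_\p(A_m),\ord_\p(A_n)) \leq \ord_\p((\a-1))$, I would fix $\p$ and argue by cases using the order $f = f_\a(\p)$. If $\a\in\p$, then $\a^k-1 \equiv -1 \mod{\p}$, so $\ord_\p(A_k) = 0 = \ord_\p((\a-1))$ for all $k$. If $\a\notin\p$ and $f>1$, then $\p\mid A_k$ forces $f\mid k$; since $\gcd(m,n)=1$, the order $f$ cannot divide both $m$ and $n$ (that would force $f\mid 1$), so at least one of $\ord_\p(A_m),\ord_\p(A_n)$ vanishes, while $\ord_\p((\a-1))=0$ because $\a\not\equiv 1\mod{\p}$. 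The only genuinely substantive case is $f=1$, that is, $\p\mid(\a-1)$.

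In this last case, let $p$ be the rational prime below $\p$. By Lemma~\ref{lem: p-adic valuation of ideal}, together with the remark that its vanishing (third) clause holds for all prime ideals, one has $\ord_\p(C_d) = 0$ unless $d=1$ or $d=p^i$ with $i\geq 1$. Since $\gcd(m,n)=1$, the prime $p$ divides at most one of $m,n$; say $p\nmid n$. Then no divisor of $n$ has the form $p^i$ with $i\geq 1$, so $\ord_\p(A_n) = \sum_{d\mid n}\ord_\p(C_d) = \ord_\p(C_1) = \ord_\p((\a-1))$, and therefore $\min(\ord_\p(A_m),\ord_\p(A_n)) \leq \ord_\p((\a-1))$. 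Assembling the three cases over all $\p$ and combining with the easy inclusion yields equality of valuations everywhere, hence $\gcd(A_m,A_n) = (\a-1)\O_K$.

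The step requiring the most care is making the argument uniform across all prime ideals, including the odd ramified primes and the primes above $2$ where Lemma~\ref{lem: p-adic valuation of ideal} does \emph{not} pin down the exact valuation of $C_{p^i}$. The resolution is that I never need those exact values: I use only the vanishing clause (valid for all $\p$) and the fact that coprimality guarantees, for each relevant $\p$, that at least one of $m,n$ avoids both the obstruction $f$ and the underlying rational prime $p$. Consequently any surplus prime-power contributions to $A_m$ or $A_n$ are automatically discarded by the minimum, and only the universal factor $C_1 = (\a-1)$ survives.
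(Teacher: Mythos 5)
Your proof is correct and takes essentially the same route as the paper: both rest on the factorization $A_k=\prod_{d\mid k}C_d$ from Corollary \ref{cor: cyclo. ideal factorization} and on the vanishing clause of Lemma \ref{lem: p-adic valuation of ideal}, which restricts the indices $d$ with $\p\mid C_d$ to $d=p^{i}f_\a(\p)$, so that $\gcd(m,n)=1$ forces every common prime factor into $C_1=(\a-1)\O_K$. Your prime-by-prime valuation bookkeeping (including the explicit treatment of $\a\in\p$ and of ramified primes and primes above $2$) is a more careful rendering of the paper's terser observation that nontrivial cyclotomic ideals with coprime indices are coprime.
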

\begin{proof}
If $m=n=1$, then the result is trivially true.
    Suppose now $\gcd(m,n)=1$ and that at least one of $m$ or $n$ is greater than $1$. Without loss of generality assume $m>1$. Using the factorizations 
    \[
    A_m = \prod_{d\mid m} C_m \text{ and } A_n = \prod_{e\mid n} C_n. 
    \]
    it is clear that $C_1 = A_1$ divides both $A_m$ and $A_n$. Suppose $d$ and $e$ are non-trivial divisors of $m$ and $n$ respectively. If $\p\mid C_d$ the previous lemma implies that $d=f_\a(\p)p^i$ for some $i\geq 0$. Since $d$ and $e$ are coprime we deduce that $\ord_\p(C_e)=0$. Therefore, the cyclotomic ideals are coprime for coprime indices and hence, 
    \[
    \gcd(A_m, A_n) = A_1
    \]
    as desired. 
\end{proof}

We will call a non-zero ideal $I$ square-free if $\ord_\p(I) \leq 1$ for all prime ideals $\p$. We will say $I$ is powerful if $\ord_\p(I)\geq 2$ for all prime ideal divisors $\p$ of $I$. 
\begin{lem}\label{lem: non-wieferich criteria}
    Suppose $A_n = U_nV_n$ where $U_n$ is square-free and $V_n$ is powerful. If $\p\mid U_n$, then 
    \[
    \a^{N(\p)-1}\not\equiv 1 \mod{\p^2}. 
    \]
\end{lem}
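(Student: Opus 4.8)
The plan is to translate both the hypothesis and the conclusion into statements about $\ord_\p$ and to connect them through the single quantity $\delta_\a(\p)$, which depends only on $\p$ and $\a$ and not on $n$. Since $U_n$ is square-free and $V_n$ is powerful, I take the factorization to be the canonical (coprime) one, so that $\p \mid U_n$ means $\ord_\p(U_n) = 1$ and $\ord_\p(V_n) = 0$; that is, the hypothesis is precisely $\ord_\p(A_n) = 1$. On the other side, the desired conclusion $\a^{N(\p)-1} \not\equiv 1 \mod{\p^2}$ is equivalent to $\p^2 \nmid (\a^{N(\p)-1}-1)$, i.e.\ to $\ord_\p(A_{N(\p)-1}) < 2$. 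So it suffices to show that $\ord_\p(A_n) = 1$ forces $\ord_\p(A_{N(\p)-1}) = 1$.

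First I would extract information from the hypothesis. By Corollary \ref{cor: cyclo. ideal factorization}, $\ord_\p(A_n) = \sum_{d \mid n} \ord_\p(C_d)$, and by Lemma \ref{lem: p-adic valuation of ideal}, together with the remark following it which handles the odd ramified primes and the primes above $2$, the only divisors $d$ of $n$ with $\ord_\p(C_d) > 0$ are $d = f_\a(\p)$, which contributes $\delta_\a(\p) \ge 1$, and the multiples $d = p^i f_\a(\p)$ with $i \ge 1$, each of which contributes a positive integer. Because all of these terms are positive and their sum equals $1$, I conclude that $\delta_\a(\p) = 1$ and that no $d = p^i f_\a(\p)$ with $i \ge 1$ divides $n$.

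Next I would run the same bookkeeping at the exponent $m = N(\p) - 1$. Since $\a \notin \p$, Lagrange's theorem in $(\O_K/\p)^\times$ gives $f_\a(\p) \mid N(\p) - 1$, so $\p \mid A_m$ and $m$ is a genuine multiple of $f_\a(\p)$. The crucial arithmetic point is that $N(\p)$ is a power of the rational prime $p$ below $\p$, so $m = N(\p) - 1$ is coprime to $p$; hence no $d = p^i f_\a(\p)$ with $i \ge 1$ can divide $m$. By Lemma \ref{lem: p-adic valuation of ideal} the only surviving term in $\ord_\p(A_m) = \sum_{d \mid m}\ord_\p(C_d)$ is $d = f_\a(\p)$, so $\ord_\p(A_{N(\p)-1}) = \ord_\p(C_{f_\a(\p)}) = \delta_\a(\p) = 1 < 2$, which is exactly the claimed non-Wieferich condition.

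The valuation bookkeeping is routine; the step carrying the real weight is recognizing that $\delta_\a(\p)$ is the common bridge, so that the local hypothesis at $n$ (namely $\ord_\p(A_n)=1$) pins down $\delta_\a(\p)=1$, and that the coprimality of $N(\p)-1$ with $p$ collapses the cyclotomic sum at the exponent $N(\p)-1$ to the single term $\ord_\p(C_{f_\a(\p)})$. I would take care to cite the remark after Lemma \ref{lem: p-adic valuation of ideal} explicitly, so that the argument is uniform across unramified, odd ramified, and dyadic primes, since the bare statement of that lemma treats only the odd unramified case.
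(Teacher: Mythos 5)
Your proof is correct and follows essentially the same route as the paper's: both reduce the hypothesis to $\ord_\p(A_n)=\sum_{d\mid n}\ord_\p(C_d)=1$, use Lemma \ref{lem: p-adic valuation of ideal} to force $\delta_\a(\p)=1$, and then exploit the coprimality of $N(\p)-1$ with the residue characteristic $p$ to collapse $\ord_\p(A_{N(\p)-1})$ to the single term $\ord_\p(C_{f_\a(\p)})=\delta_\a(\p)=1$. Your explicit appeals to the coprime (canonical) choice of $U_nV_n$ and to the remark covering ramified and dyadic primes are details the paper leaves implicit, and they only strengthen the write-up.
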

\begin{proof}
    Let $f_\a(\p)$ denote the order of $\a\mod{\p}$ and write 
    \[
    A_n = \prod_{d\mid n} C_n. 
    \]
    Since $\p\mid U_n$ we have that 
    \[
   \ord_\p(A_n) = \sum_{d\mid n} \ord_\p(C_d)=1.
    \]
   Lemma \ref{lem: p-adic valuation of ideal} implies that $n=mf_\a(\p)$ for some positive integer $m$ coprime to the rational prime $p$ below $\p$. Thus, $\ord_{\p}(C_{f_\a(\p)})=1=\delta_\p$ by Lemma \ref{lem: exact order}. Moreover, 
   \[
   A_{N(\p)-1} = C_{f_{\p}(\a)}\prod_{\substack{d\mid (N(\p)-1)\\d\neq f_\a(\p)} } C_{d}.
   \]
Since $N(\p)-1$ is coprime to the rational prime $p$, Lemma \ref{lem: p-adic valuation of ideal} implies that $\ord_\p(C_d)=0$ for each $C_d$ in the product. Hence, $\ord_{\p}(A_{N(\p)-1}) = \ord_\p(C_{f_\a(\p)})=1$.   
\end{proof}

\section{Proof of Theorem \ref{thm: thm 1}}
We will require a couple of elementary lemmas before proving Theorem \ref{thm: thm 1}. 
\begin{lem}\label{lem: divisors of square-free}
    Suppose $\{\b_n\}_{n\geq 1}$ is a sequence of square-free ideals of $\O_K$. If $N(\b_n) \to \infty$ as $n\to \infty$, then 
    \[
    \{\p \in \O_K: \p \text{ is a prime ideal and } \p\mid \b_n \text{ for some $n$} \}
    \]
    is unbounded. 
\end{lem}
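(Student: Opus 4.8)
The plan is to argue by contradiction. Suppose the displayed set of prime divisors were finite, say $\{\p_1,\ldots,\p_k\}$. Since each $\b_n$ is square-free, every prime appears in $\b_n$ to exponent at most $1$, so each $\b_n$ is a product of a subset of $\{\p_1,\ldots,\p_k\}$. There are only $2^k$ such subsets, hence only finitely many possible ideals $\b_n$. But then the norms $N(\b_n)$ take only finitely many values, contradicting $N(\b_n)\to\infty$.

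Concretely, I would first record that square-freeness forces $\ord_{\p}(\b_n)\in\{0,1\}$ for every prime $\p$ and every $n$. Next, under the assumption that the union of prime divisors is contained in the finite set $\{\p_1,\ldots,\p_k\}$, unique factorization of ideals in the Dedekind domain $\O_K$ gives
\[
\b_n = \prod_{i=1}^{k}\p_i^{\,\epsilon_{i,n}},\qquad \epsilon_{i,n}\in\{0,1\}.
\]
Taking norms yields $N(\b_n)=\prod_{i=1}^{k}N(\p_i)^{\epsilon_{i,n}}\leq \prod_{i=1}^{k}N(\p_i)$, a bound independent of $n$. This contradicts the hypothesis that $N(\b_n)\to\infty$, so the set of prime divisors must be infinite (in particular unbounded).

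I do not anticipate a genuine obstacle here; the statement is a clean pigeonhole-type argument, and the only thing to be careful about is invoking unique factorization of ideals in $\O_K$ to guarantee that the finitely many available primes, each to a bounded exponent, can produce only finitely many distinct ideals of bounded norm. One small point worth stating explicitly is what ``unbounded'' means: since the set of primes dividing some $\b_n$ is then shown to be infinite, it certainly contains primes of arbitrarily large norm, which is the sense in which it is unbounded. I would phrase the conclusion as the infinitude of this set, from which unboundedness of the norms of its members is immediate.
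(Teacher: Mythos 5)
Your proof is correct and follows essentially the same route as the paper: assume the set of prime divisors is finite, use square-freeness to bound $N(\b_n)$ by the product of the norms of those finitely many primes, and contradict $N(\b_n)\to\infty$. The extra pigeonhole observation that only $2^k$ ideals are possible is fine but not needed, since the uniform norm bound alone yields the contradiction.
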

\begin{proof}
    Towards a contradiction, suppose there are only finitely many primes in the above set, say $\p_1, \ldots, \p_r$. Then for any $n\geq 1$, 
    \[
    N(\b_n) \leq \prod_{i=1}^r N(\p_i), 
    \]
    since $\b_n$ is square-free. The product on the right is bounded and independent of $n$. This implies that $\lim_{n\to \infty} N(\b_n) < B <\infty$, a contradiction. 
\end{proof}

\begin{lem}\label{lem: bounds for support}
    Let $K/\Q$ be an algebraic number field and $\a\in O_K$ be an admissible Wieferich base. As above, let $(\a^n-1) = A_n = U_nV_n$. Then
    \[
    \supp_K(U_n) \leq N(U_n)^{[K:\Q]}
    \]
    and 
    \[
    \supp_K(V_n) \ll_K \sqrt{N(V_n)}. 
    \]
\end{lem}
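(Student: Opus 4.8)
The plan is to reduce both inequalities to a pointwise comparison between the local factors $N(\p)^{e(\p\mid p)}$ defining $\supp_K$ and the corresponding local factors of $N(U_n)$ or $N(V_n)$, using the structural hypotheses (square-free, resp.\ powerful) together with two facts about ramification: the ramification index is globally bounded by $[K:\Q]$, and only finitely many primes of $K$ ramify.

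For the first bound I would start from $\supp_K(U_n) = \prod_{\p\mid U_n} N(\p)^{e(\p\mid p)}$, and observe that since $U_n$ is square-free we have $\ord_\p(U_n) \le 1$ for all $\p$, hence $N(U_n) = \prod_{\p\mid U_n} N(\p)$. The ramification index satisfies $e(\p\mid p) \le [K:\Q]$ for every $\p$ (a consequence of the fundamental identity $\sum_{\p\mid p} e(\p\mid p) f(\p\mid p) = [K:\Q]$). Inserting this uniform bound into each factor gives $\supp_K(U_n) \le \prod_{\p\mid U_n} N(\p)^{[K:\Q]} = N(U_n)^{[K:\Q]}$ immediately.

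For the second bound the decisive observation is that only the finitely many primes dividing the discriminant $\Delta_K$ ramify, so I would split the defining product for $\supp_K(V_n)$ into its ramified and unramified parts. On the unramified primes $e(\p\mid p)=1$, so that portion equals $\prod_{\p\mid V_n,\ \p\text{ unram.}} N(\p) \le \prod_{\p\mid V_n} N(\p)$. Here the powerful hypothesis enters: since $\ord_\p(V_n)\ge 2$ for every $\p\mid V_n$, we have $N(V_n) \ge \bigl(\prod_{\p\mid V_n} N(\p)\bigr)^2$, so $\prod_{\p\mid V_n} N(\p) \le \sqrt{N(V_n)}$. The ramified part is at most $\prod_{\p\text{ ram.\ in }K} N(\p)^{e(\p\mid p)}$, a finite product depending only on $K$; denoting it $C_K$, the two estimates combine to yield $\supp_K(V_n) \le C_K\sqrt{N(V_n)} \ll_K \sqrt{N(V_n)}$.

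Neither step is genuinely difficult, but the one point requiring care is the second bound: one must resist applying the crude bound $e(\p\mid p)\le [K:\Q]$ uniformly, which would only give the far weaker $\bigl(\sqrt{N(V_n)}\bigr)^{[K:\Q]}$. The correct move is to absorb the ramified primes into the $K$-dependent constant $C_K$ and exploit $e=1$ on the unramified primes, where the powerful condition supplies the square-root saving. I would also note explicitly that $\supp_K$ depends only on the \emph{set} of prime divisors of $V_n$ and not on their multiplicities, which is precisely what makes $C_K$ independent of $n$.
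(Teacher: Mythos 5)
Your proof is correct and follows essentially the same route as the paper: the first bound via $e(\p\mid p)\le[K:\Q]$ together with square-freeness, and the second by splitting $\supp_K(V_n)$ into ramified and unramified primes, using the powerful condition for the square-root saving on the unramified part and absorbing the finitely many ramified primes into a constant depending only on $K$. The only difference is cosmetic: the paper carries the ramified primes' contribution to $N(V_n)$ through the estimate (working with $\supp_K(V_n)^2$), whereas you bound the ramified portion by a constant outright, which is equally valid.
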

\begin{proof}
    The first inequality is trivial upon noting that the ramification index of any prime ideal $\p$ is bounded by the degree of the extension $[K:\Q]$ and that $\ord_\p(U_n) = 1$ for all prime ideal divisors $\p$ of $U_n$. By definition we have that $\ord_\p(V_n)\geq 2$ for all $\p\mid V_n$. If $\p\mid V_n$ and $e(\p|p) =1$ we have that 
    \[
    \prod_{\substack{\p \mid V_n\\ e(\p|p)=1}} N(\p)^{2e(\p|p)} \leq   \prod_{\substack{\p \mid V_n\\ e(\p|p)=1}} N(\p)^{\ord_\p(V_n)}.
    \]
    If $\p \mid V_n$ and $e(\p|p)\geq 2$ we have that
    \[
   \prod_{\substack{\p \mid V_n\\ e(\p|p)\geq 2}} N(\p)^{2e(\p|p)} \leq \prod_{\substack{\p\mid V_n\\ e(\p|p) \geq 2}} N(\p)^{2[K:\Q] + \ord_\p(V_n)} \leq A^2 \prod_{\substack{\p\mid V_n\\ e(\p|p) \geq 2}} N(\p)^{\ord_p(V_n)}. 
    \]
    where 
    \[
    A = \prod_{\substack{\p\subset \O_K\\ e(\p|p)\geq 2}} N(\p)^{[K:\Q]}.
    \]
    We note that $A<\infty$ as there are only finitely many ramified primes in $\O_K$. In all, we deduce that 
    \[
    \supp_K(V_n)^2 =\prod_{\p \mid V_n} N(\p)^{2e(\p|p)} \leq A^2 \prod_{\substack{\p \mid V_n\\ e(\p|p)=1}} N(\p)^{\ord_\p(V_n)}\prod_{\substack{\p\mid V_n\\ e(\p|p) \geq 2}} N(\p)^{\ord_p(V_n)} = A^2 N(V_n). 
    \]
    Hence, $\supp_K(V_n) \ll_K \sqrt{N(V_n)}$.
\end{proof}

\begin{proof}[Proof of Theorem \ref{thm: thm 1}]
    Let $\a$ be an admissible Wieferich base for the field $K$. As in Lemma \ref{lem: non-wieferich criteria} we will write $A_n = (\a^n-1)=U_nV_n$ for $U_n$ square-free and $V_n$ powerful. Towards a contradiction, suppose that $N(U_n)$ is bounded as $n\to \infty$. We will show that this implies that $n$ must be bounded.     \newline
    
    Fix $\eps\in (0,1)$. Applying the $abc$-conjecture to the (tautological) equation
    \[
    1 + (\a^n-1) = \a^n
    \]
    we obtain the inequality
    \[
    H(1, \a^n, \a^n-1) \ll_{K, \eps, \a} \left(\supp(1, \a^n, \a^n-1) \right)^{1+\eps}. 
    \]
    It is clear that $1, \a^n-1, $ and $\a^n$ are pairwise coprime. Therefore, 
    \[
    \supp(1, \a^n, \a^n-1) = \supp(\a)\supp(\a^n-1). 
    \]
    By Lemma \ref{lem: bounds for support}, we deduce that 
    \[
    \supp(\a^n-1) = \prod_{\p\mid A_n}N(\p) \ll_K N(U_n)^{[K:\Q]}N(V_n)^{1/2}. 
    \]
    Inputting these bounds into the $abc$-conjecture and using Lemma \ref{lem: lower bound for height} we find 
    \[
    N(V_n)^{\frac{1-\eps}{2}} \ll_{K, \a, \eps} \supp(\a)^{1+\eps} N(U_n)^{(1+\eps)[K:\Q]-1}.
    \]
    Since $\a$ is fixed, $\supp(\a)$ is a fixed and by assumption, $\lim_{n\to \infty}N(U_n)<B$ for some finite constant $B$. Therefore, $N(V_n)$ must also be bounded as $n\to \infty$. This contradicts Corollary \ref{cor: norm to infty}. Hence, $\{U_n\}_{n\geq 1}$ is a sequence of square-free ideals with norm going to infinity. By Lemma \ref{lem: non-wieferich criteria} we conclude that there must be infinitely many base-$\a$ non-Wieferich primes. 
\end{proof}

\section{Proof of Theorem \ref{thm: 3}}
We will call a prime ideal $\p$ a base-$\a$ \textit{super-Wieferich prime} if 
\[
\a^{N(\p)-1} \equiv 1 \mod{\p^3}.
\]
\begin{thm}
    Suppose $\a$ is an admissible Wieferich base. If there are finitely many base-$\a$ super-Wieferich primes, then there are infinitely many base-$\a$ non-Wieferich primes.
\end{thm}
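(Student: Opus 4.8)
The plan is to argue by contradiction and to reuse, almost verbatim, the elliptic-curve mechanism of Proposition \ref{prop: norm to infty}. Suppose that there are only finitely many base-$\a$ super-Wieferich primes and, contrary to the assertion, only finitely many base-$\a$ non-Wieferich primes. For an odd unramified prime $\p$ with $\a\notin\p$ one has $f_\a(\p)\mid N(\p)-1$ while $p\nmid N(\p)-1$, so Lemma \ref{lem: p-adic valuation of ideal} gives $\ord_\p(A_{N(\p)-1})=\delta_\a(\p)$. Thus $\p$ is non-Wieferich exactly when $\delta_\a(\p)=1$ and super-Wieferich exactly when $\delta_\a(\p)\ge 3$, and both finiteness assumptions together say that the set $F=\{\p:\delta_\a(\p)\text{ is odd}\}$ is finite, i.e. $\delta_\a(\p)=2$ for all but finitely many $\p$. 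I will restrict attention to prime exponents $n=\ell$, where the divisor structure of $A_\ell$ is simplest.

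For all but finitely many primes $\ell$ one has $A_\ell=C_1C_\ell=(\a-1)\,C_\ell$, and every $\p\mid C_\ell$ has $f_\a(\p)=\ell$ with $p\neq\ell$, so $\ord_\p(A_\ell)=\delta_\a(\p)$ by Lemma \ref{lem: p-adic valuation of ideal}. Reducing each exponent modulo $2$, the ``square-free part'' $\prod_{\ord_\p(A_\ell)\text{ odd}}\p$ is supported on primes of $F$; since each $\p\in F$ has a single value $f_\a(\p)$, only finitely many $\ell$ are affected by the primitive members of $F$, and for the rest this square-free part divides the fixed ideal $\prod_{\p\in F,\ \p\mid\a-1}\p$. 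Hence, for all but finitely many primes $\ell$,
\[
(\a^\ell-1)=J_\ell\,\mathfrak m_\ell^{\,2},\qquad \mathfrak m_\ell=\prod_{f_\a(\p)=\ell}\p,
\]
with $J_\ell$ ranging over a finite set of ideals of bounded norm. Passing to the ring $\O_{K,S}$ of Proposition \ref{prop: O_{K,S} has class number 1}, which is a principal ideal domain, I write $\mathfrak m_\ell\O_{K,S}=(\mu_\ell)$, absorb the ideal ambiguity into $\O_{K,S}^\times$, reduce the resulting unit modulo the finite group $\O_{K,S}^\times/(\O_{K,S}^\times)^2$, and fold together the finitely many $J_\ell$ to obtain $\a^\ell-1=B_\ell\,Y_\ell^{\,2}$, where $B_\ell$ lies in a finite set $\mathcal B\subset\O_{K,S}$ and $Y_\ell\in\O_{K,S}$.

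To finish I manufacture cubics exactly as in Proposition \ref{prop: norm to infty}. Writing $\ell=3m_\ell+r_\ell$ with $r_\ell\in\{1,2\}$ gives $\a^\ell=\a^{r_\ell}(\a^{m_\ell})^3$, so with $X_\ell=\a^{m_\ell}$ the pair $(X_\ell,Y_\ell)$ is an $S$-integral point on $B\,Y^2=\a^{r}X^3-1$ for $(r,B)\in\{1,2\}\times\mathcal B$. This is a finite family of nonsingular plane cubics (the polynomial $\a^rX^3-1$ has distinct roots), hence elliptic curves; after a coordinate change and, if necessary, enlarging $S$ by the finitely many primes dividing $\a$ and the elements of $\mathcal B$, each becomes a Weierstrass model on which $X_\ell\in\O_{K,S}$ is an $S$-integral $x$-coordinate. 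Since there are infinitely many primes $\ell$, some fixed curve carries the points coming from infinitely many of them; as $\a$ is not a root of unity the values $X_\ell=\a^{m_\ell}$ are pairwise distinct, so that curve has infinitely many $S$-integral points, contradicting Siegel's theorem.

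The crux of the argument — and the unique step where both hypotheses are indispensable — is confining $J_\ell$, and hence $B_\ell$, to a finite set. This is precisely the reduction of $\ord_\p(A_\ell)$ modulo $2$, which is harmless only because finiteness of the non-Wieferich primes removes the exponent-$1$ contributions and finiteness of the super-Wieferich primes removes the odd exponents $\ge 3$; choosing $n=\ell$ prime is what eliminates the troublesome $\ord_p(n)$ terms from Lemma \ref{lem: p-adic valuation of ideal}. The remaining ingredients — the passage to $\O_{K,S}$, the reduction of units modulo squares, and the reduction of $\ell$ modulo $3$ to produce a cube — are routine bookkeeping modeled on Proposition \ref{prop: norm to infty}.
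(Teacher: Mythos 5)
Your proof is correct and follows essentially the same route as the paper's: both argue by contradiction, use the two finiteness hypotheses to force $\a^q-1$ (for all but finitely many prime exponents $q$) into the shape (bounded non-square part) $\times$ (square), pass to a class-number-one localization $\O_{K,S}$, and reduce exponents mod $2$ and mod $3$ so that infinitely many primes yield distinct $S$-integral points on a finite family of cubic curves, contradicting Siegel's theorem. The only differences are bookkeeping: you absorb all odd-exponent prime factors into a finite set of ideals $J_\ell$ and reduce units abstractly modulo $(\O_{K,S}^\times)^2$, whereas the paper tracks the (assumed finitely many) non-Wieferich primes as explicit factors $\p_1^{\eps_1}\cdots\p_r^{\eps_r}$ and reduces exponents along a fundamental system of $S$-units.
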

\begin{proof}
    Suppose there are finitely many base-$\a$ super-Wieferich primes. We will consider a set $T$ of odd rational primes whose members satisfy all of the the following conditions:
    \begin{enumerate}[label=(\roman*)]
        \item $q\in T$ if any prime $\q$ above $q$ is not a base-$\a$ super-Wieferich prime. 
        \item $q\in T$ if for any base-$\a$ super-Wieferich prime $\p$,  $f_\a(\p) \neq q$.
        \item $q\in T$ if no prime $\q$ above $q$ divides $A_1 = (\a-1)\O_K$.
       
    \end{enumerate}
    We note that as there are only finitely many base-$\a$ super-Wieferich primes and finitely many prime ideals dividing $A_1$, conditions (i)--(iii) remove only finitely many rational primes from consideration. Therefore, $T$ contains infinitely many rational primes. \newline 

   Condition (iii) ensures no prime $\q|q$ divides $C_q$. Indeed, suppose that $\q\mid C_q$. Then, 
   \[
   \a^q \equiv 1 \mod{\q}. 
   \]
    Since $f_\q$ is the order of $\a \mod{\q}$, we must have that $f_\q \mid q$. As $q$ is a rational prime, this implies that $f_\a(\q)=q$ or $f_\a(\q)=1$. By Lagrange's theorem, the first case cannot happen as $N(\q)-1$ and $q$ are coprime. In the second case, we deduce that $\a\equiv 1 \mod{\q}$.  \newline

  Suppose $\p|p$ divides $C_q$ for some $q\in T$. Then Lemma \ref{lem: p-adic valuation of ideal} implies that $q = p^if_\a(\p) $ for some $i\geq 0$. We note that we cannot have that $f_\a(\p)=1$ and $q=p$ as it violates condition (iii). Hence, for every $q\in T$ if $\p$ is a prime ideal dividing $C_q$ we must have that $q=f_\a(\p)$ and that $v_\p(C_q) =\delta_\a(\p)$. \newline 

We can now state precisely which primes can divide $C_q$ and to which power. 
\begin{itemize}
    \item \textit{Non-Wieferich prime factors.} Suppose that $\p$ is a base-$\a$ non-Wieferich prime, i.e.,  $\delta_\a(\p)=1$.  If $\p\mid C_q$ for some $q\in T$, then $v_\p(C_q)=1$.
    \item \textit{Super-Wieferich prime factors.} If $\p$ is a super-Wieferich prime factor of  $C_q$  for some $q\in T$, then $f_\a(\p)=q$. This violates condition (ii) for membership in $T$. Therefore, no super-Wieferich prime can divide $C_q$. 
    \item \textit{Wieferich prime factors.} Suppose $\p$ is a Wieferich prime but not a super-Wieferich prime, i.e., $\delta_\a(\p)=2$. If $\p\mid C_q$ for some $q\in T$,  then $v_\p(C_q) =2$.
\end{itemize}

We now proceed by contradiction. Suppose there are finitely many non-Wieferich primes in $T$, say $\p_1, \ldots, \p_r$. For each prime $q\in T$ we will consider the factorization of $C_q$. From the above, for each $q\in T$ we obtain a factorization of the form
\[
C_q = \p_1^{\eps_1}\cdots \p_r^{\eps_r} w^2
\]
where $\eps_i\in \{0,1\}$ and $w$ is a product of distinct base-$\a$ Wieferich primes. \newline

Let $S$ be a multiplicative set so that $\O_{K,S}$ has class number one. The prime ideals of $\O_{K,S}$ are of the form $S^{-1}\p$ for prime ideals $\p\in \O_K$ that do not meet $S$, i.e., $\p\cap S=\emptyset$. If $\p\cap S$, then $\p$ becomes the unit ideal in $\O_{K,S}$. We will denote by $s$, the unit rank of $\O_{K,S}$. Localizing the ideal $C_q$ at $S$ gives:
\[
S^{-1}C_q = (S^{-1}\p_1)^{\eps_1}\cdots (S^{-1}\p_r)^{\eps_r} (S^{-1}w)^2. 
\]
By assumption there are finitely many non-Wieferich primes and hence there are infinitely many non-Wieferich primes. As such, there are only finitely many primes $q$ for which $S^{-1}C_q$ is the unit ideal in $\O_{K,S}$. If $\p_i$ is a non-Wieferich prime that does not meet $S$, then $S^{-1}\p_i$ is a prime ideal in $\O_{K,S}$, and hence principal. Write $\pi_i$ for the prime element that generates each $S^{-1}\p_i$. Without loss of generality, we assume that there are $0\leq m \leq r$ such $\pi_i$. Assume that $N$ is positive integer such that if $q\in T$ and $q\geq N$ then  $S^{-1}C_q$ is not the unit ideal. Then for each $q\geq N$ we have the following equation in $\O_{K,S}$:
\[
\a^q-1 = (\a-1)\eta  \pi_1^{\eps_1}\cdots \pi_m^{\eps_m}w'^2
\]
where $w'$ is a product of distinct localized Wieferich primes and $\eta$ is a unit of $ \O_{K,S}$.  We can write $\eta = \zeta \fun_1^{t_1}\cdots \fun_{s-1}^{t_{s-1}}$ where $\zeta$ is a root of unity of $\O_K$ and $\{\fun_i\}_{i=1}^{s-1}$ is a fundamental system of $S$-units. For each prime $q$ we write $q = 3q_i + \beta_i$ and for each $t_i$ we write $t_i = 2\ell_i +\gamma_i $ where $\beta_i \in \{0,1,2\}$ and $\gamma_i\in \{0,1\}$. We consider the finite family of elliptic curves defined over $K$ by

\[
(\a-1)AY^2 = BX^3-1
\]
where
\begin{align*}
    A = \zeta \fun_1^{\gamma_i}\cdots \fun_{s-1}^{\gamma_i}\pi_1^{\eps_1}\cdots \pi_m^{\eps_m}\,\,\, &\text{ and }\,\,\, B = \a^{\beta_i}.
\end{align*}

Denoting by $W_K$ the set of the roots of unity in $\O_K$, there are at most $\#W_K \cdot 2^{m+ s-1}\cdot 3 $ many curves in this family. By construction each prime $q\in T$ produces a unique $\O_{K,S}$ solution to one of these curves. However, this contradicts Siegel's theorem which implies that each of these curves has only finitely many solutions in $\O_{K,S}$ and hence the entire family of curves has only finitely many $\O_{K,S}$ solutions. Therefore, there must be infinitely many non-Wieferich primes. 
\end{proof}

\section{Proof of Theorem \ref{thm: thm 2}}

\begin{proof}[Proof of Theorem \ref{thm: thm 2}]

    We define two sets
    \begin{align*}
        S_1(x) &= \{\text{prime ideals }\p \subseteq \O_K : N(\p)\leq x \text{ and } \a^{\
        N(\p)-1}\not \equiv 1 \mod{\p^2}\}\\
        S_2(x) & = \{\text{prime ideals }\p \subseteq \O_K : \p \mid U_n \text{ for some $n$ and } N(U_n)\leq x\}. 
    \end{align*}
    By Lemma \ref{lem: non-wieferich criteria}, it is clear that $S_2(x) \subseteq S_1(x)$ so we will find a lower bound for $S_2(x)$. Theorem~\ref{thm: thm 1} and Theorem \ref{thm: 3} imply that $N(U_n) \to \infty$. As such, there is a positive integer $M$ (depending on $K, \eps,$ and possibly $\a$) such that $N(U_n)\geq N(\a-1)$ for all $n\geq M$. \newline

    By Lemma \ref{lem: upperbound for norm} we see that $N(\a^n-1)\leq x$ if  
    \[
        n\leq \frac{\log (\frac{x}{2})}{\log h(\a)}. 
    \]
    Noting that $\log(x/2) \geq \frac{1}{2}\log(x)$ for all $x\geq 4$, any positive integer $n\leq \frac{\log x}{2\log h(\a)}$ will satisfy $N(\a^n-1)\leq x$. Since $N(U_n) > N(U_1)$ for all $n\geq M$, there must be some prime ideal divisor of $U_n$ that does not divide $(\a-1)\O_K$. This observation combined with Lemma \ref{lem: gcd of ideals} implies that for each rational prime $q$ in the range
    \[
    M \leq q \leq \frac{\log x}{\log h(\a)}
    \]
    there is some prime ideal $\p$ such that $\p\mid U_q$ and $\p\nmid U_\ell$ for any other rational prime $\ell\neq q$. Hence, 
    \[
    S_2(x) \supseteq \left\{\text{rational primes $p$} : M\leq p\leq \frac{\log x}{2\log h(\a)}\right\}
    \]
    where $\pi(x)$ is the prime counting function. We deduce
    \[
    |S_1(x)|\geq |S_2(x)| \geq \pi\left(\frac{\log x}{2\log h(\a)} \right) - \pi(M). 
    \]
    Using any sort of Chebyshev-like lower bound for the prime counting function, $\pi(x)$, and noting that $\pi(M)=O(1)$, we conclude
    \[
    |S_1(x)| \gg_{K, \a, \eps} \frac{\log x}{\log\log x}
    \]
    as $x\to \infty$. 

\end{proof}
\section*{Acknowledgments} 
The research of the first author was partially supported by an Ontario Graduate Scholarship. Research of the second author was partially supported by an NSERC Discovery grant.   We thank the anonymous referee for their helpful comments.

\printbibliography
\vspace*{1cm}
{\small \textsc{Nic Fellini\newline 
Department of Mathematics and Statistics\newline 
Queen's University\newline  Kingston, ON, Canada  K7L 3N8}\newline 
\texttt{n.fellini@queensu.ca}}

\vskip 10pt
{\small \textsc{M. Ram Murty \newline 
Department of Mathematics and Statistics\newline 
Queen's University\newline  Kingston, ON, Canada  K7L 3N8}\newline 
{\tt murty@queensu.ca}} 
\end{document}